\definecolor{darkblue}{rgb}{0.0, 0.0, 0.6}
\newlist{thmlist}{enumerate}{1} \setlist[thmlist]{label={\rm(\arabic{thmlisti})}, ref=\thethm.(\arabic{thmlisti}),noitemsep} \Crefname{thmlisti}{Theorem}{Theorems}
\newlist{lemlist}{enumerate}{1} \setlist[lemlist]{label={\rm(\arabic{lemlisti})}, ref=\thelemma.(\arabic{lemlisti}),noitemsep} \Crefname{lemlisti}{Lemma}{Lemmas}
\theoremstyle{plain}
\newtheorem{thm}{Theorem}
\Crefname{thm}{Theorem}{Theorems}
\numberwithin{equation}{section}
\newtheorem{lemma}{Lemma}
\numberwithin{lemma}{section}
\Crefname{lemma}{Lemma}{Lemmas}
\newtheorem{cor}[lemma]{Corollary}
\Crefname{cor}{Corollary}{Corollaries}
\newtheorem{prop}[lemma]{Proposition}
\Crefname{prop}{Proposition}{Propositions}
\newtheorem*{thm*}{Theorem}
\newtheorem*{lemma*}{Lemma}
\theoremstyle{definition}
\newtheorem{dfn}[lemma]{Definition}
\Crefname{dfn}{Definition}{Definitions}
\Crefname{example}{Example}{Examples}
\theoremstyle{remark}
\Crefname{rem}{Remark}{Remarks}
\DeclareMathOperator{\K}{K}
\DeclareMathOperator{\G}{G}
\DeclareMathOperator{\GL}{GL}
\DeclareMathOperator{\SL}{SL}
\DeclareMathOperator{\E}{E}
\DeclareMathOperator{\EP}{EP}
\DeclareMathOperator{\Hh}{H}
\DeclareMathOperator{\U}{U}
\DeclareMathOperator{\M}{M}
\DeclareMathOperator{\SR}{SR}
\DeclareMathOperator{\sr}{sr}
\DeclareMathOperator{\asr}{asr}
\DeclareMathOperator{\shape}{shape}
\DeclareMathOperator{\Max}{Max}
\DeclareMathOperator{\Spec}{Spec}
\DeclareMathOperator{\Epin}{Epin}
\DeclareMathOperator{\Stab}{Stab}
\DeclareMathOperator{\ASR}{ASR}
\DeclareMathOperator{\Ums}{Ums}
\DeclareMathOperator{\Umd}{Umd}
\DeclareMathOperator{\rk}{rk}
\newcommand{\rA}{\mathsf{A}}
\newcommand{\rB}{\mathsf{B}}
\newcommand{\rC}{\mathsf{C}} 
\newcommand{\rD}{\mathsf{D}} 
\newcommand{\rE}{\mathsf{E}}
\newcommand{\rF}{\mathsf{F}}
\newcommand{\rG}{\mathsf{G}}
\newcommand{\indexbox}[1]{\text{\fboxsep=.1em\fbox{\m@th$\displaystyle#1$}}}
\title[Decompositions of congruence subgroups of Chevalley groups]{Decompositions of congruence subgroups \\ of Chevalley groups}
\keywords {parabolic decomposition, bounded generation, $\SL_2$-factorization, subsystem factorization, product decomposition. {\em Mathematical Subject Classification (2010):} 20G35, 19B14}
\author{Sergey Sinchuk}
\address{Chebyshev laboratory, St. Petersburg State University, St. Petersburg, Russia}
\email{sinchukss {\it at} yandex.ru;}
\author{Andrei Smolensky}
\address{Department of Mathematics and Mechanics, St. Petersburg State University, St. Petersburg, Russia}
\email{andrei.smolensky {\it at} gmail.com}
\thanks{The authors of the present paper acknowledge the financial support of the Russian Science Foundation grant 14-11-00297}
\date {\today}
\begin{document}

\begin{abstract} 
We formulate and prove relative versions of several decompositions known in the theory of Chevalley groups over commutative rings.
These decompositions are used to obtain factorizations in terms of subsystem subgroups of type $\rA_\ell$ and 
 upper estimates of the width of principal congruence subgroups with respect to Tits--Vaserstein generators.
%Some of our results are new even in the absolute case and were previously studied only for groups over finite fields.
\end{abstract}

\maketitle

\section{Introduction}\label{sec:intro}
This paper studies decompositions of Chevalley groups over commutative rings (see e.\,g.~\cite{VP} for the introduction to Chevalley groups over rings).
Recall that classical decompositions of Chevalley groups over fields such as Bruhat decomposition do not generalize to groups over more general rings.
Nevertheless, Chevalley groups over rings of {\it finite stable rank} do admit several remarkable ``parabolic decompositions'', i.\,e. decompositions formulated in terms of parabolic and unipotent subgroups.
The interest in these decompositions comes, in particular, from the study of stability problems for $\K_1$-functors modeled on Chevalley groups (see e.\,g.~\cite{St78, Si13}).

The first goal of this paper is to obtain analogues of these parabolic decompositions for congruence subgroups of Chevalley groups.
One of our main results is the following theorem, which is, essentially, a version of Dennis--Vaserstein decomposition for the relative elementary group $\E(\Phi, R, I)$
 (cf.~\cite[Theorems~2.5 and~4.1]{St78},~\cite[Theorem~1.2]{Si13}).
\begin{thm}\label{thm:DennisVaserstein}
Let $\Phi \neq \rE_8$ be an irreducible root system of rank $\ell\geqslant 2$ and $\{ r, s \}$ be a pair of distinct endnodes of the Dynkin diagram of $\Phi$.
Denote by $d$ the distance between $r$ and $s$ on the Dynkin diagram.
Assume that $\Phi$, $I$ and $\{r, s\}$ satisfy either of the following assumptions:
\begin{thmlist}
 \item $\sr(I) \leqslant d$ for $\Phi$ classical or $\Phi=\rG_2, \rF_4$;
 \item $\sr(I) \leqslant d$ for $\Phi=\rE_6,\rE_7$ with $\{r, s\} = \{2, \ell\}$;
 \item $\asr(I) \leqslant d$ for $\Phi=\rE_6,\rE_7$ with $\{r, s\} = \{1, \ell \}$, 
\end{thmlist}
Then $\E(\Phi, R, I) = \EP_r(R, I) \cdot \U(\Sigma^-_r \cap \Sigma^-_s, I) \cdot \EP_s(R, I).$
\end{thm}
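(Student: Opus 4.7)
The proof follows the standard recipe for Dennis--Vaserstein-type factorizations, combined with Stepanov's relative generating set from \cref{prop:Stepanov}. Setting
\[ \mathcal{D} = \EP_r(R,I)\cdot \U(\Sigma_r^-\cap\Sigma_s^-, I)\cdot \EP_s(R,I), \]
the inclusion $\mathcal{D}\subseteq \E(\Phi,R,I)$ is immediate from the construction of the relative elementary parabolic subgroups. To prove the reverse inclusion, my plan is first to invoke \cref{prop:Stepanov} with the parabolic subset corresponding to omitting the simple root at $s$, presenting $\E(\Phi,R,I)$ on a generating set whose elements already sit in $\mathcal{D}$, and then to verify that $\mathcal{D}$ is closed under multiplication by these generators.

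For the first task, every generator $z_\alpha(a,\xi)$ with $\alpha\in\Sigma_s$ lies in $\EP_s(R,I)\subseteq\mathcal{D}$ by the very definition of the relative elementary parabolic. Each basic unipotent $x_\alpha(\xi)$ with $\alpha$ a root of the Levi part of the chosen parabolic, or with $\alpha\in\Sigma_s$, is again in $\EP_s(R,I)$. For $\alpha\in\Sigma_s^-$ one splits $\Sigma_s^- = (\Sigma_r\cap\Sigma_s^-)\sqcup(\Sigma_r^-\cap\Sigma_s^-)$: the first piece is contained in $\EP_r(R,I)$ and the second in the middle factor $\U(\Sigma_r^-\cap\Sigma_s^-,I)$.

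The closure step is the heart of the argument and relies on a relative Gauss-type decomposition inside the Levi $L_{r,s}$ of the intersection parabolic. Removing the two terminal nodes $r,s$ from the connected Dynkin diagram of $\Phi\neq\rE_8$ leaves a subsystem whose component carrying the $r$--$s$ path has type $\rA_{d-1}$ in the classical case and in the $\{2,\ell\}$-subcase of $\rE_\ell$, and a type whose long-root subsystem has rank $d-1$ in the remaining $\rE_\ell$ and $\rF_4$ subcases. The stable rank hypothesis $\sr(I)\leq d$ (respectively $\asr(I)\leq d$) permits a Bass--Vaserstein-type surgery on $(d{+}1)$-tuples drawn from~$I$ inside this Levi, reducing the required identity to the relative Gauss decomposition of \cref{thm:srRI1}. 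This is exactly what one needs to absorb back into $\EP_r(R,I)$ the ``correction'' terms produced whenever a negative-root unipotent from $\U(\Sigma_s^-, I)$ is commuted across the middle factor.

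The main obstacle is the non-simply-laced case and the $\rE_\ell$-with-$\{1,\ell\}$ case. Short-root commutator relations and the extra-long branch of $\rE_\ell$ generate additional terms that plain stable rank does not kill, which is precisely the reason $\sr$ must be strengthened to $\asr$ in items~(iii) and~(iv). The delicate bookkeeping that matches each residual root with the correct stability notion, and the verification that absolute stable rank is sufficient to clear the remaining $\rA_1$-type obstructions after the Gauss step in $L_{r,s}$, is the principal technical difficulty of the proof.
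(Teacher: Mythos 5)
Your high-level skeleton (take Stepanov's generating set from \cref{prop:Stepanov} and show the product set $\mathcal{D}=\EP_r(R,I)\cdot\U(\Sigma_r^-\cap\Sigma_s^-,I)\cdot\EP_s(R,I)$ is preserved by it) is indeed the strategy of the paper, but the two claims you put the weight on do not hold as stated, so there is a genuine gap. First, the generators $z_\alpha(a,\xi)=x_\alpha(a)^{x_{-\alpha}(\xi)}$ with $\alpha\in\Sigma_s$ do \emph{not} lie in $\EP_s(R,I)$ ``by the very definition'': the conjugating factor $x_{-\alpha}(\xi)$ has its parameter $\xi$ in $R$, not in $I$, and lies outside the parabolic, so membership of these elements in $\mathcal{D}$ is exactly as hard as the theorem itself. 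The whole difficulty is to show that $\mathcal{D}$ is stable under conjugation by unipotents with parameters from $R$; this is why the paper introduces the normalizing monoid $N_{rs}=\{g\in\E(\Phi,R)\mid g\mathcal{D}g^{-1}\subseteq\mathcal{D}\}$ and proves $\EP_s(R)\subseteq N_{rs}$ (so that $\mathcal{Z}(\Sigma_s^-)\subseteq\mathcal{X}^{\EP_s(R)}$ can be absorbed), a point your proposal never engages with.

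Second, the closure mechanism you sketch cannot work as described: \cref{thm:srRI1} requires $\sr(I)=1$, while here only $\sr(I)\leqslant d$ (or $\asr(I)\leqslant d$) is available, and no ``surgery on $(d{+}1)$-tuples'' inside the Levi between $r$ and $s$ reduces the latter hypothesis to the former; moreover the relevant Levi is not the $\rA_{d-1}$ piece of the $r$--$s$ path but the full terminal Levi $\Delta_r$, which is classical of type $\rA$, $\rC$ or $\rD$ with rank calibrated to $d$. The paper uses the stability hypotheses quite differently: by \cref{lemma:uraction} (action of the unipotent radicals of $\Delta_r$ on the $I$-unimodular column $g\cdot v^+$ in the basic representation with highest weight $\varpi_s$) one annihilates the coordinates adjacent to $\alpha_r$, which gives $X_{-\alpha_r}(R)\subseteq N_{rs}$ (\cref{lemma:Stein_reduction}); Weyl-group transitivity and, for $\rC_\ell$ and $\rF_4$, the second stability lemma \cref{lemma:DVST} then yield $\EP_s(R)\subseteq N_{rs}$, and the theorem follows via \cref{prop:Stepanov} and \cref{lemma:Chevalley-Matsumoto}. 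In particular the $\sr$ versus $\asr$ dichotomy in the four cases reflects which classical type ($\rA$, $\rC$ or $\rD$) occurs as the relevant Levi in \cref{lemma:uraction}, not short-root obstructions left over after a Gauss step, so the ``principal technical difficulty'' you identify is not the one the proof actually has to overcome.
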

The notation for the groups and ring-theoretic invariants appearing in the statement of the above theorem is introduced in sections~\ref{sec:preliminaries} and~\ref{sec:stability-conditions}.
Other relative decompositions studied in this article are Gauss and Bass--Kolster decompositions (see~\cref{thm:srRI1,thm:BassKolster}).

As an application of parabolic factorizations, we deduce several {\it subgroup factorizations} of finite width for relative Chevalley groups.
For example, as a corollary of Bass--Kolster decomposition we obtain upper estimates of the number of factors in the presentation of a classical Chevalley group
 as a product of its subsystem subgroups $\SL(2, R, I)$, see~\cref{thm:SL2width}.
As an application of~\cref{thm:DennisVaserstein} we prove the following theorem inspired by the main result of~\cite{Nik07}.
\begin{thm}\label{thm:spin-sln-prod} Assume that $\sr(I) \leqslant 2$. Then the group $\Epin(2\ell, R, I)=\E(\rD_\ell, R, I)$ is a product of at most $9$ regularly embedded subgroups of type $\rA_{\ell-1}$. \end{thm}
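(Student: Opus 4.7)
The plan is to combine the relative Dennis--Vaserstein decomposition with a tightened version of Nikolov's argument in the $\rD_\ell$ case. Since $\sr(I)\leqslant 2$, one applies \cref{thm:DennisVaserstein} to $\Phi = \rD_\ell$ with the pair $\{r,s\}$ of terminal fork vertices of the Dynkin diagram, which are at distance $d=2$ from each other. Condition (i) of \cref{thm:DennisVaserstein} is satisfied and yields
\[
\E(\rD_\ell, R, I) = \EP_r(R, I) \cdot \U(\Sigma_r^- \cap \Sigma_s^-, I) \cdot \EP_s(R, I).
\]
Removing either fork vertex from the $\rD_\ell$ diagram produces an $\rA_{\ell-1}$ diagram, so the Levi subgroups $L_r$, $L_s$ of these end parabolics are both of type $\rA_{\ell-1}$, regularly embedded in $\Epin_{2\ell}$. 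Moreover, the two end parabolics are cominuscule (they correspond to the half-spin representations of $\Spin_{2\ell}$), so their unipotent radicals $\U(\Sigma_r, I)$ and $\U(\Sigma_s, I)$ are abelian.

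Using the standard relative parabolic factorization $\EP_t(R,I) = L_t(R,I)\cdot\U(\Sigma_t, I)$ for $t\in\{r,s\}$, the Dennis--Vaserstein decomposition rewrites $\Epin_{2\ell}(R,I)$ as a product of five factors: the two $\rA_{\ell-1}$-Levis $L_r(R,I)$, $L_s(R,I)$, and the three unipotent groups $\U(\Sigma_r, I)$, $\U(\Sigma_r^-\cap\Sigma_s^-, I)$, $\U(\Sigma_s, I)$. What remains is to express each of these three unipotent factors as a product of a bounded number of conjugates of regularly embedded $\rA_{\ell-1}$-subgroups. The targeted count is seven conjugates for the three unipotent factors together, which together with the two Levi factors yields the desired bound of nine.

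To perform this unipotent-step count, I would adapt \cite[Proposition~1]{Nik07} to exploit $\rD_\ell$-specific features: (a) the abelianity of $\U(\Sigma_r, I)$ and $\U(\Sigma_s, I)$, which lets one decompose them into a sum of root subgroups with no commutator overhead; (b) the diagram symmetry interchanging $r$ and $s$, which lets the same geometric construction handle $\U(\Sigma_r, I)$ and $\U(\Sigma_s, I)$ in parallel; and (c) the fact that conjugating $L_r$ by a suitable Weyl lift (for instance, by $w_{\alpha}$ with $\alpha$ a long root containing $\alpha_s$) produces a subgroup of type $\rA_{\ell-1}$ whose root set picks up a prescribed collection of short roots $e_i+e_j$ in $\Sigma_s$. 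A careful case analysis of the roots of $\Sigma_r^-\cap\Sigma_s^-$ shows that the middle unipotent also lies inside a small number of such Weyl-translates of $L_r$. Throughout, the passage to the relative setting is routine, since $\U(\Sigma,I)$ is normalized by the Weyl lifts chosen from $\E(\rD_\ell, R)$.

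The main obstacle is the sharp bookkeeping in step (c): Nikolov's argument, applied naively, produces a bound far larger than seven, and one really has to use both the abelianity of the cominuscule radicals and the fork symmetry simultaneously to cut the count down. A secondary technical point is verifying that the refined root-theoretic lemmas underlying the decomposition go through with coefficients in $I$ rather than in $R$; this is where the hypothesis $\sr(I)\leqslant 2$ is used implicitly through surjective stability for $\K_1$, allowing replacement of arbitrary elements of the Levi subgroups by products in $L_r(R,I)$.
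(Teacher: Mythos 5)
Your overall route is the same as the paper's: apply \cref{thm:DennisVaserstein} to the two fork vertices of $\rD_\ell$ (distance $2$, so $\sr(I)\leqslant2$ suffices), split the two end parabolics via the Levi decomposition into $\rA_{\ell-1}$-Levis times abelian unipotent radicals, and then cover the three remaining unipotent pieces by few conjugates of $\rA_{\ell-1}$-subgroups. The genuine gap is that this last step --- the sharpened analogue of Nikolov's Proposition~1 --- is exactly the new technical content of the theorem, and you only announce it as a target (``seven conjugates for the three unipotent factors'') while conceding that the naive bookkeeping gives a far larger bound. The paper closes this gap by a concrete construction (\cref{lemma:nikolov-type-dl}): take the mutually orthogonal roots $\beta_k$, namely the maximal roots of the nested chain of subsystems of type $\rD_{\ell-2k+2}$, form the single element $y=\prod_k x_{\beta_k}(1)$, and prove by induction on $\ell$, using abelianity of $\U(\Sigma_\ell^+,I)$ and the commutator formula, that
\[
\U(\Sigma_\ell^+, I)\subset[\U(\Delta_\ell^-, I), y]\cdot{}^w\!\U(\Delta_\ell^+, I),
\]
where $w$ is a Weyl element with $w(B)\subseteq\Delta_\ell^+$ supplied by \cref{lemma:nikolov-weyl}. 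Since $[\U(\Delta_\ell^-,I),y]\subseteq\U(\Delta_\ell^-,I)\cdot{}^{y}\!\U(\Delta_\ell^-,I)$, each end radical costs three conjugates of subgroups of $\rA_{\ell-1}$-type groups, one of which is absorbed into the adjacent Levi, and the middle unipotent $\U(\Sigma_{\ell-1}^-\cap\Sigma_\ell^-,I)$ costs three more; that is how the bound $9$ is actually reached. Your step (c) --- Weyl translates of $L_r$ ``picking up'' prescribed roots, plus ``a careful case analysis'' for the middle factor --- is precisely the argument that must be carried out, and your proposal does not carry it out, so the bound of nine is not established.

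Two smaller inaccuracies. First, you say the hypothesis $\sr(I)\leqslant2$ is used ``implicitly through surjective stability for $\K_1$''; in fact it is used exactly once, to satisfy case (i) of \cref{thm:DennisVaserstein} with $d=2$. The covering lemma for the unipotent radicals holds over an arbitrary commutative ring with no stability assumption, and no $\K_1$-stability enters at all, since the statement concerns the relative elementary group $\E(\rD_\ell,R,I)$ itself rather than $\G(\rD_\ell,R,I)$. Second, $\rD_\ell$ is simply laced, so there are no ``short roots $e_i+e_j$''; this slip is harmless in itself but signals that the combinatorial mechanism you intend for step (c) has not been pinned down.
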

As another application, we obtain results on the bounded generation of the relative elementary group.
Recall from~\cite[Theorem~2]{Va86} that $\E(\Phi, R, I)$ can be generated by the set of Stein--Tits--Vaserstein generators
 $z_\alpha(s, \xi) =  x_{-\alpha}(-\xi) x_\alpha(s) x_{-\alpha}(\xi)$, where $s\in I$ and $\xi\in R$
(in fact, a smaller generating set suffices, see~\cref{prop:Stepanov-theorem}).
In section~\ref{sec:boundgen} we deduce from an earlier result of O.~Tavgen that the width $\E(\Phi, R, I)$ with respect to this generating set is
 finite for Dedekind rings of arithmetic type having a real embedding.
Moreover, as a further application of parabolic factorizations, we obtain in~\cref{thm:width} concrete estimates of such width
 for ideals of the ring $\mathbb{Z}[\sfrac{1}{p}]$ and also for ideals of stable rank $1$.

\section{Preliminaries}\label{sec:preliminaries}
Let $\Phi \subseteq \mathbb{R}^\ell$ be a reduced irreducible root system of rank $\ell$ with a fixed basis of simple roots $\Pi=\{\alpha_1, \ldots, \alpha_l\}$.
We use the conventional numbering of basis vectors of $\Pi$ which follows Bourbaki (see~\cite[Table~1]{PSV98}).
For a root $\alpha\in\Phi$ we denote by $m_i(\alpha)$ the $i$-th coefficient in the expansion of $\alpha$ in $\Pi$,
 i.\,e. $\alpha = \sum_{i=1}^n m_i(\alpha) \alpha_i$.

A proper closed subset of roots $S\subseteq \Phi$ is called {\it parabolic} (resp. {\it reductive}, resp. {\it special}) if $\Phi=S \cup -S$ (resp. $S = -S$, resp. $S \cap -S=\varnothing$).
Any parabolic subset $S \subseteq \Phi$ can be decomposed into the disjoint union of its \emph{reductive} and \emph{special} parts, i.\,e. 
$S = \Sigma_S \sqcup \Delta_S$, where $\Sigma_S \cap (-\Sigma_S) = \varnothing$, $\Delta_S = -\Delta_S$.

We denote by $(\alpha, \beta)$ the scalar product of roots and by $\langle \alpha, \beta\rangle$ the integer $2(\alpha, \beta)/(\beta, \beta)$.
Recall that fundamental weights $\varpi_1,\ldots,\varpi_\ell$ are the vectors of $\mathbb{R}^\ell$ that are characterized by the property that $\langle \varpi_i, \alpha_j \rangle = \delta_{ij}$.
 
Denote by $W(\Phi)$ the Weyl group, i.\,e. subgroup of isometries of $\Phi$ generated by all simple reflections $\sigma_\alpha$, $\alpha\in\Phi$.
For a reductive subset $\Delta\subseteq \Phi$ denote by $W(\Delta)$ the subgroup of $W(\Phi)$ generated by $\sigma_\alpha$ for $\alpha\in\Delta$.

For a nonempty $J\subseteq \Pi$ consider the following subsets of roots:
\begin{align*}
& \Delta_J = \{\alpha \in \Phi \mid \shape(J, \alpha)=0\}, \\
& \Sigma^\pm_J = \{\alpha \in \Phi \mid \shape(J, \alpha) \in \mathbb{Z}_{>0} \Phi^\pm \}, \\
& S_J^\pm = \Delta_J \sqcup \Sigma_J^\pm,
\end{align*}
where the $J$-shape of a root $\beta\in \Phi$ is defined by the formula: $\shape(J, \beta)=\sum\nolimits_{i\in J} m_i(\beta) \alpha_i.$
Clearly, $\Delta_J$ is a reductive subset, while $S^\pm_J$ and $\Sigma^\pm_J$ are parabolic and special subsets, respectively.
For two disjoint subsets $I, J\subseteq \Pi$ one has 
\[ \Sigma^\pm_{I \cup J} = \Sigma^\pm_I\cup\Sigma^\pm_J, \quad \Delta_{I\cup J} = \Delta_I \cap \Delta_J. \]
We omit curly braces in the above notations when $J$ is a one- or two-element set, e.\,g. $\Delta_k=\Delta_{\{k\}}$ and $\Sigma_{i, j}^\pm=\Sigma_{\{i, j\}}^\pm$, etc.

\begin{lemma}[{\cite[Lemma~1]{ABS}}]\label{lemma:abs}
Let $\alpha, \beta \in \Sigma^\pm_J$ be a pair of roots of the same length such that $\shape(J, \alpha)=\shape(J, \beta)\neq 0$.
Then $\alpha$ and $\beta$ are conjugate under the action of $W(\Delta_J)$.
\end{lemma}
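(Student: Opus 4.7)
The plan is to argue by induction on the non-negative quantity $|\alpha - \beta|^2$, reducing via reflections $\sigma_\gamma$ with $\gamma \in \Delta_J$.

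First I would verify that $W(\Delta_J)$ preserves both length (trivially) and $J$-shape: any $\gamma \in \Delta_J$ lies in $\mathrm{span}(\Pi \setminus J)$, so $\sigma_\gamma(\alpha) - \alpha = -\langle\alpha, \gamma\rangle \gamma$ has vanishing $J$-shape. Thus the claim amounts to transitivity of $W(\Delta_J)$ on each level set of the length function inside $R_\mu := \{\nu \in \Phi : \shape(J, \nu) = \mu\}$.

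For the inductive step, suppose $\alpha \ne \beta$ and set $\delta = \alpha - \beta \in \mathbb{Z}(\Pi \setminus J) \subset V_J := \mathrm{span}\,\Delta_J$. The length equality $|\alpha|^2 = |\beta|^2$ gives $(\alpha + \beta, \delta) = 0$, hence
\[
2(\alpha, \delta) = |\delta|^2 > 0, \qquad 2(\beta, \delta) = -|\delta|^2 < 0.
\]
The key step is to produce $\gamma \in \Delta_J$ with $(\alpha, \gamma) \cdot (\beta, \gamma) < 0$, for the reflection formula then yields
\[
|\sigma_\gamma(\alpha) - \beta|^2 = |\delta|^2 + 2\langle\alpha, \gamma\rangle (\beta, \gamma) < |\delta|^2,
\]
and applying the induction hypothesis to $(\sigma_\gamma(\alpha), \beta)$ produces the required $w \in W(\Delta_J)$.

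The main obstacle is establishing the existence of such a $\gamma$. Writing $\tilde\alpha, \tilde\beta$ for the orthogonal projections of $\alpha, \beta$ onto $V_J$, I note that $\tilde\alpha - \tilde\beta = \delta$; that each $\sigma_\gamma$ descends to an ordinary reflection of projections; that the projection map $R_\mu \to V_J$ is injective; and that $|\alpha| = |\beta|$ is equivalent to $|\tilde\alpha| = |\tilde\beta|$, since all shape-$\mu$ roots share a common $V_J^\perp$-component equal to the projection of $\mu$. A suitable $\gamma$ exists precisely when some reflection hyperplane strictly separates $\tilde\alpha$ from $\tilde\beta$. In the remaining case, where $\tilde\alpha$ and $\tilde\beta$ lie in the closure of one common Weyl chamber of $\Delta_J$, I would invoke the representation-theoretic description of the shape-$\mu$ layer $\mathfrak{g}_\mu$ of the adjoint representation of $\mathfrak{g}(\Phi)$, restricted to the Levi subalgebra of type $\Delta_J$: combining the standard $\mathfrak{sl}_2$-string argument inside each irreducible summand with the fact that distinct summands have highest weights of distinct norms forces equinormic roots to share a single $W(\Delta_J)$-orbit, completing the descent.
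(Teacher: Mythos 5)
The paper gives no argument for this lemma at all — it is quoted verbatim from \cite[Lemma~1]{ABS} — so your proof has to stand on its own. Its skeleton is sound: $W(\Delta_J)$ does preserve length and $J$-shape, the identity $|\sigma_\gamma(\alpha)-\beta|^2=|\alpha-\beta|^2+2\langle\alpha,\gamma\rangle(\beta,\gamma)$ is correct, and since the possible values of $|\alpha-\beta|^2$ form a discrete set, induction on this quantity works as soon as one produces, for $\alpha\neq\beta$, a root $\gamma\in\Delta_J$ with $(\alpha,\gamma)(\beta,\gamma)<0$. Note that when $(\alpha,\beta)>0$ such a $\gamma$ is immediate: $\gamma=\alpha-\beta$ is then a root, it lies in $\Delta_J$ because the shapes cancel, and $\sigma_\gamma(\alpha)=\beta$ outright.

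The genuine gap is the remaining case, where no root hyperplane of $\Delta_J$ strictly separates the projections, i.e.\ where after conjugating one may assume $\alpha$ and $\beta$ are both $\Delta_J$-dominant. Ruling this out is the entire content of the lemma, and your representation-theoretic paragraph does not do it. The two facts you invoke — that distinct irreducible Levi-summands of the layer $\mathfrak{g}_\mu$ have highest weights of distinct norms, and that inside one summand equinormic weights form a single $W(\Delta_J)$-orbit — are both left unproved, and together they are essentially a restatement of the lemma; the structure of these layers as Levi-modules is precisely what \cite{ABS} develop from this lemma, so the appeal is circular. Worse, the second claim is not a consequence of any ``standard $\mathfrak{sl}_2$-string argument'': for a Levi of type $\rA_1\times\rA_1$ the irreducible module of highest weight $(2,2)$ contains the dominant, equinormic, non-conjugate weights $(2,0)$ and $(0,2)$, so irreducibility plus string saturation cannot force a single orbit. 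A correct completion must genuinely use that the weights in question are roots of the ambient system $\Phi$ sharing a nonzero shape (for instance, by a direct analysis of the case $(\alpha,\beta)\leqslant 0$, which is where the real work in \cite{ABS} lies); as written, the key step of your induction is missing.
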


Let $\G(\Phi, R)$ be the simply-connected Chevalley group of type $\Phi$ over an arbitrary commutative ring $R$ and let $\E(\Phi, R)$ be its elementary subgroup,
i.\,e. the subgroup generated by the elementary root unipotents $x_\alpha(\xi)$, $\alpha\in\Phi$, $\xi\in R$, see~\cite{VP, St78, S}.

For an ideal $I \trianglelefteq R$ we denote by $\G(\Phi, R, I)$ the \emph{principal congruence subgroup} of level $I$.
Recall that the \emph{relative elementary Chevalley subgroup} $\E(\Phi, R, I) \leqslant \G(\Phi, R, I)$ is defined as the normal closure in $\E(\Phi, R)$ of the subgroup 
$\E(\Phi, I)$ generated by the set $\mathcal{X} = \{x_\alpha(s) \mid \alpha\in \Phi, \ s \in I \}$.

For a subset of roots $S\subseteq\Phi$ denote by $\mathcal{Z}(S)$ the union of $\mathcal{X}$ and $\{ z_\alpha(s, \xi) \mid s\in I, \ \xi \in R, \ \alpha \in S\}.$
\begin{prop}[{\cite[Theorem~3.4]{S}}] \label{prop:Stepanov-theorem} Let $\Phi$ be an irreducible root system of rank $\geq 2$ and $S \subseteq \Phi$ be an arbitrary parabolic subset of roots with special part $\Sigma_S$.
 Then the relative elementary subgroup $\E(\Phi, R, I)$ is generated as an abstract group by $\mathcal{Z}(\Sigma_S)$. \end{prop}
For an ideal $I$ denote by $I^{\indexbox{2}}$ the ideal of $R$ generated by the squares of elements of $I$.
 \begin{lemma}[{\cite[Corollary~3.3]{S}}]\label{lemma:Stepanov-ideal}
Let $\Phi$ be a root system of rank $\geqslant2$, let $R$ be a commutative ring and $I\trianglelefteq R$ be its ideal.
If $\Phi\neq\rC_\ell$ then $\E\left(\Phi, R, I^2\right)\leqslant\E(\Phi, I)$, otherwise $\E\left(\Phi, R, II^{\indexbox{2}}\right)\leqslant\E(\Phi, I)$.
\end{lemma}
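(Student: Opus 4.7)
The plan is to reduce the inclusion to a generator-by-generator check and then execute it via Chevalley's commutator formula. By the Tits--Vaserstein theorem recalled in the introduction, $\E(\Phi, R, I^2)$ is generated by the elements $x_\alpha(s)$ and $z_\alpha(s, \xi)$ with $\alpha \in \Phi$, $s \in I^2$, $\xi \in R$; both are additive in their first argument, so it suffices to treat the case when $s$ is a single product $s_1 s_2$ with $s_1, s_2 \in I$ (or, in the $C_\ell$ case, one of the monomials $2 s_1 s_2$ and $s_1^2 s_2$, which together generate $I \cdot I^{\indexbox{2}}$ additively).

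For the $x_\alpha$ generator I would use $\rk(\Phi) \geq 2$ to decompose $\alpha = \beta + \gamma$ with $\beta, \gamma \in \Phi$ and apply~\eqref{rel:CCF}:
\[
[x_\beta(s_1), x_\gamma(s_2)] = x_\alpha(N_{\beta \gamma 1 1}\, s_1 s_2) \cdot \prod_{i + j \geq 3} x_{i \beta + j \gamma}(N_{\beta \gamma i j}\, s_1^i s_2^j).
\]
All scalars on the right lie in $I^2 \subseteq I$, so every factor is in $\mathcal{X}(I)$. Whenever $N_{\beta \gamma 1 1} = \pm 1$, this identity directly exhibits $x_\alpha(s_1 s_2) \in \E(\Phi, I)$. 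For $\Phi \neq C_\ell$ such a decomposition can always be chosen: in simply-laced types the leading constant is automatically $\pm 1$; in $B_\ell$ and $F_4$, a long root arises as a sum of two short roots with leading constant $\pm 1$, and a short root as a sum of a long and a short one with leading constant $\pm 1$. For a long root $2 e_i$ of $C_\ell$, however, every decomposition $\alpha = \beta + \gamma$ forces $N = \pm 2$; combining this with the secondary identity $[x_{e_i - e_j}(s), x_{2 e_j}(t)] = x_{e_i + e_j}(\pm 2 s t) \cdot x_{2 e_i}(\pm s^2 t)$ shows that $x_{2 e_i}(u) \in \E(\Phi, I)$ exactly for $u \in 2 I^2 + I^3 = I \cdot I^{\indexbox{2}}$, which is precisely the origin of the exceptional formulation in type $C_\ell$.

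For the $z_\alpha$ generator I would then apply the identity $[a, b]^c = [a^c, b^c]$ to the above commutator expression, reducing matters to checking that each conjugate $x_\beta(s_1)^{x_{-\alpha}(\xi)}$ and $x_\gamma(s_2)^{x_{-\alpha}(\xi)}$ (together with the analogous conjugates of the correction terms) lies in $\E(\Phi, I)$. Writing $y^x = y \cdot [y, x]$ and invoking~\eqref{rel:CCF} once more,
\[
x_\beta(s_1)^{x_{-\alpha}(\xi)} = x_\beta(s_1) \cdot x_{-\gamma}(\pm s_1 \xi) \cdot \prod_{i + j \geq 3} x_{i \beta - j \alpha}(C_{i j}\, s_1^i \xi^j),
\]
and every factor lies in $\mathcal{X}(I)$ because $s_1 \in I$ forces $s_1 \xi$ and $s_1^i \xi^j$ to lie in $I$. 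The same computation handles $x_\gamma(s_2)^{x_{-\alpha}(\xi)}$, and combining the two exhibits $z_\alpha(s_1 s_2, \xi)$ as a product of elements of $\mathcal{X}(I)$.

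The main obstacle is the careful bookkeeping of the higher-order terms in the Chevalley formula for non-simply-laced types: one has to verify that, outside the long-root case in $C_\ell$, every root admits a decomposition $\alpha = \beta + \gamma$ with leading structure constant $\pm 1$, and that all correction terms (both in the initial expansion of $x_\alpha(s_1 s_2)$ and in each subsequent conjugation by $x_{-\alpha}(\xi)$) produce scalars that remain in $I$. The exceptional form of the statement in type $C_\ell$ reflects the fact that, for long roots there, the leading constant $\pm 2$ is unavoidable and is not corrected by any identity at the $I^2$ level, forcing the ideal $I \cdot I^{\indexbox{2}}$ instead.
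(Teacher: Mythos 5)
Your strategy---reduce via the Tits--Vaserstein generators to the elements $z_\alpha(s,\xi)$ with $s$ ranging over additive generators of $I^2$ (resp.\ $II^{\indexbox{2}}$), realize $x_\alpha(s_1s_2)$ through the commutator formula~\eqref{rel:CCF} with leading constant $\pm1$, and then conjugate factor by factor by $x_{-\alpha}(\xi)$---is the right one; the paper itself offers no proof of \cref{lemma:Stepanov-ideal} and only cites \cite[Corollary~3.3]{S}, and this is essentially the calculation behind that result. However, the structure-constant bookkeeping, which you yourself single out as the main obstacle, is wrong exactly where it matters. In $\rB_\ell$ and $\rF_4$ a long root written as a sum of two \emph{short} roots always has leading constant $\pm2$, not $\pm1$: if $\beta,\gamma$ are short and $\beta+\gamma$ is long, then $(\beta,\gamma)=0$, so $\gamma-\beta$ is again a root, hence $p=1$ and $N_{\beta\gamma11}=\pm(p+1)=\pm2$---the very same phenomenon as for the long roots of $\rC_\ell$. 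As written, your verification therefore breaks down for the long roots of $\rB_\ell$ and $\rF_4$ and would only yield an $II^{\indexbox{2}}$-type conclusion there, contradicting the statement you are proving. The repair is to decompose a long root as a sum of two \emph{long} roots, where the constant is $\pm1$; this is possible because the long roots form a subsystem of type $\rD_\ell$ ($\ell\geqslant3$), resp.\ $\rD_4$ (and is impossible precisely for $\rB_2=\rC_2$, which falls under the excluded case). In addition, $\rG_2$ satisfies the hypotheses of the lemma but is missing from your case analysis; there the constants $\pm2,\pm3$ occur and one must check explicitly that suitable decompositions with constant $\pm1$ exist (long $=$ long $+$ long inside the $\rA_2$ of long roots, short $=$ short $+$ long).

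Two smaller corrections. The asserted equality $2I^2+I^3=II^{\indexbox{2}}$ is false in general: for $I=(x,y)\trianglelefteq\mathbb{Z}[x,y]$ one has $2xy\in 2I^2$ but $2xy\notin II^{\indexbox{2}}$ (and likewise $I^3\not\subseteq II^{\indexbox{2}}$). Your argument does not need it: it suffices that the elements $a^2b$, $a,b\in I$, span $II^{\indexbox{2}}$ additively, after which the identity $[x_{e_i-e_j}(a),x_{2e_j}(b)]=x_{e_i+e_j}(\pm ab)\,x_{2e_i}(\pm a^2b)$ (the coefficient of the $e_i+e_j$ term is $\pm ab$, not $\pm2ab$, though this is harmless since either way it lies in $I$), conjugated term by term by $x_{-2e_i}(\xi)$, handles all generators $z_{2e_i}(s,\xi)$ with $s\in II^{\indexbox{2}}$. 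Finally, the generators $x_\alpha(s)$ with $s\in I^2\subseteq I$ lie in $\E(\Phi,I)$ for free; the commutator expression is needed only as input for the conjugation step, which is indeed how you use it.
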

For $\varepsilon\in R^*$ set $w_\alpha(\varepsilon) = x_\alpha(\varepsilon) x_{-\alpha}(-\varepsilon^{-1}) x_{\alpha}(\varepsilon).$
If $\rk(\Phi)\geqslant 2$ the following relation holds:
\begin{equation}\label{rel:R3}
w_\alpha(\varepsilon) x_{\beta}(\xi) w_\alpha(\varepsilon)^{-1} =
x_{\sigma_{\alpha}\beta} \left(\eta_{\alpha, \beta}\cdot\varepsilon^{-\langle\beta, \alpha \rangle}\xi\right), \quad \varepsilon\in R^*, \ \xi\in R.
\end{equation}
where $\eta_{\alpha, \beta}=\pm 1$. The coefficients $\eta_{\alpha, \beta}$ can be expressed in terms of the structure constants of the corresponding Lie algebra (see~\cite[\S13]{VP}).
For a reductive subset $\Delta \subseteq \Phi$ denote by $\widetilde{W}(\Delta)$ the \emph{extended Weyl group}, i.\,e. the subgroup of $\E(\Phi, R)$ generated by all $w_{\alpha}(1)$, $\alpha \in \Delta$.
\begin{lemma} \label{lemma:weylfacts} Let $\Phi$ be an irreducible root system and let $I$ be an ideal of $R$. 
\begin{lemlist}
\item \label{item-trans1} For every two roots $\alpha, \beta \in \Phi$ of the same length there exists $w \in \widetilde{W}(\Phi)$ such that $X_{\alpha}(I)^w = X_\beta(I)$.
\item \label{item-trans2} Let $\alpha_s\in \Pi$ be a fundamental root and $\alpha, \beta \in \Sigma^\pm_s$ be a pair of roots of the same length such that $m_s(\alpha) = m_s(\beta)$.
 Then there exists $w\in \widetilde{W}(\Delta_s)$ such that $X_\alpha(I)^w = X_\beta(I)$. \end{lemlist}
\end{lemma}
\begin{proof}
The first assertion follows from \eqref{rel:R3} and the fact that $W(\Phi)$ acts transitively on the set of roots of the same length.
The second assertion follows from \eqref{rel:R3} and \cref{lemma:abs}.
\end{proof}

Recall that semisimple root elements $h_\alpha(\varepsilon)$, $\alpha\in\Phi$, $\varepsilon\in R^*$ are defined as $h_\alpha(\varepsilon)=w_\alpha(\varepsilon)w_\alpha(-1)$.
These elements satisfy the following relation:
\begin{alignat}{2} 
& \phantom{[} h_\alpha(\varepsilon)x_\beta(\xi)h_\alpha(\varepsilon)^{-1} = x_\beta\left(\varepsilon^{\langle\beta, \alpha\rangle}\xi\right), && \alpha, \beta \in \Phi, \ \varepsilon \in R^*, \xi \in R. \label{rel:h-w}
\end{alignat}

For a special subset of roots $\Sigma\subseteq \Phi$ we denote by $\U(\Sigma, I)$ the subgroup spanned by root subgroups $X_{\alpha}(I)$ for $\alpha\in \Sigma$. 
For $J\subset\Pi$ the subgroup $\U(\Sigma_J^+, I)$ is normalized by $\E(\Delta_J, R)$, hence the Minkowski product set $\EP_J(R, I) = \E(\Delta_J, R, I) \cdot \U(\Sigma_J^+, I)$ is a subgroup, which we call a \emph{standard elementary parabolic subgroup}.
In the sequel the following two identities will be referred to as \emph{Levi decomposition}:
\begin{equation} \label{rel:Levi-decomp} \EP_J(R, I) = \U(\Sigma_J^+, I) \cdot \E(\Delta_J, R, I) = \E(\Delta_J, R, I) \cdot \U(\Sigma_J^+, I). \end{equation}
When $J = \{ \alpha_s \}$ for some $1 \leq s\leq \ell$, we write $\EP_s(R, I)$ as a shorthand for $\EP_{\{s\}}(R, I)$.
We also use the notation $\EP_J(R)$ instead of $\EP_J(R, R)$.

Denote by $\Hh(\Phi, R)$ the subgroup generated by all $h_\alpha(\varepsilon)$, $\alpha\in\Phi$, $\varepsilon\in R^*$, and set
\[ \Hh(\Phi, R, I) = \Hh(\Phi, R)\cap\G(\Phi, R, I)=\langle h_\alpha(\varepsilon), \ \alpha\in\Phi, \ \varepsilon\in R^*\cap(1+I)\rangle. \]
It is not hard to see that $\Hh(\Phi, R, I) \leqslant \E(\Phi, R, I)$.
Indeed, if $s\in I$ is such that $1+s\in R^*$, then $(1+s)^{-1}-1 \in I$ and we can factor $h_\alpha(\varepsilon)$ into a product of elements of $\E(\Phi, R, I)$ as follows:
\begin{multline} \label{eq:rel-tor-elementary}
h_\alpha(1+s) = x_\alpha\left(-1\middle)\, x_{-\alpha}\middle(-s\middle)\, x_\alpha\middle((1+s)^{-1}\middle)\, x_{-\alpha}\middle(s(1+s)\right) = \\
= x_\alpha\left((1+s)^{-1}-1\middle)\, z_{-\alpha}\middle(-s, (1+s)^{-1}\middle)\, x_{-\alpha}\middle(s(1+s)\right).
\end{multline}

In the sequel we use elementary facts about representations of Chevalley groups (see e.\,g.~\cite{PSV98,VP} for a more detailed introduction).
Recall that, by definition, the {\it cone of dominant weights} $P_{++}(\Phi)$ consists of all nonnegative integral linear combinations of fundamental weights $\varpi_1,\ldots \varpi_\ell$.
To each element $\mu \in P_{++}(\Phi)$ one can associate a representation $\pi$ of $\G(\Phi, R)$ on a free $R$-module $V = V(\mu)$, called {\it Weyl module}.
For our purposes it suffices to restrict attention to representations with fundamental highest weight (i.\,e. $\mu=\varpi_i$ for some $i$), which are, moreover, {\it basic} in the sense of~\cite[\S~I.2]{Ma69}.
The latter condition guarantees, in particular, that all weight subspaces $V^\lambda \leq V$ corresponding to nonzero weights $\lambda$ have dimension $1$.
Denote by $\Lambda(\pi)$ the set of weights of $\pi$ with multiplicities (i.\,e. the zero weight is repeated $\dim(V^0)$ times).

By the very construction of $V(\mu)$, there is a basis $\{e_\lambda\}$, $e_\lambda\in V^\lambda$ parameterized by elements $\lambda\in\Lambda(\pi)$ called {\it admissible basis}.
The key property of basic representations which we use in the sequel is that there are particularly simple formulae describing the action
 of elementary root unipotents $x_\alpha(\xi)$ on the vectors of admissible basis (see~\cite[Lemme~2.3]{Ma69}),
 which, moreover, can be visualized using the technique of {\it weight diagrams} (see e.\,g.~\cite{PSV98}).
 
We denote by $v^+$ the {\it highest weight vector}, i.\,e. the basis vector $e_\mu$ corresponding to the highest weight $\mu$ of $V(\mu)$.
For $v\in V(\mu)$ we denote by $v_\lambda$ the coordinate of $v$ in the expansion of $v$ in $\{e_\lambda\}$, moreover if $\pi$ is a faithful representation we can 
identify $g\in \G(\Phi, R)$ with the matrix $(g_{\lambda, \nu})$, where $g_{\lambda, \nu} = (\pi(g)\cdot e_\nu)_{\lambda}$.
The following lemma is, in essence, a relative version of Chevalley--Matsumoto decomposition which can be easily deduced from the absolute statement (cf. \cite[Theorem~1.3]{St78}).
\begin{lemma}\label{lemma:Chevalley-Matsumoto}
Let $\pi$ be a basic fundamental representation of $\G_{sc}(\Phi, R)$ with highest weight $\mu = \varpi_s$.
Assume that $g\in \G_{sc}(\Phi, R, I)$ is such that $(g\cdot v^+)_{\mu}=1$, then $g \in \U(\Sigma_s^-, I) \cdot \G_{sc}(\Delta_s, R, I) \cdot \U(\Sigma_s^+, I)$. \end{lemma}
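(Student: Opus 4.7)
The plan is to reduce to the absolute Chevalley---Matsumoto decomposition \cite[Theorem~1.3]{St78} and then control each factor modulo $I$. Since $(g\cdot v^+)_{\varpi_s}=1$ and $g\in\G_{sc}(\Phi,R)$, the absolute statement yields a presentation $g=u^-\cdot h\cdot u^+$ with $u^\pm\in\U(\Sigma_s^\pm,R)$ and $h\in\G_{sc}(\Delta_s,R)$. The remaining content is to show that the congruence condition $g\in\G_{sc}(\Phi,R,I)$ forces $u^\pm\in\U(\Sigma_s^\pm,I)$ and $h\in\G_{sc}(\Delta_s,R,I)$.

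I would do this by applying the canonical reduction $\G_{sc}(\Phi,R)\to\G_{sc}(\Phi,R/I)$ to the factorization. Since $g$ maps to the identity, we obtain $\bar u^-\cdot\bar h\cdot\bar u^+=1$ in $\G_{sc}(\Phi,R/I)$, where bars denote images. The big-cell decomposition is unique: this follows, as in the classical setting, from the scheme-theoretic intersections $\U(\Sigma_s^-)\cap\bigl(\G_{sc}(\Delta_s)\cdot\U(\Sigma_s^+)\bigr)=\{1\}$ and $\G_{sc}(\Delta_s)\cap\U(\Sigma_s^+)=\{1\}$, which persist under base change to $R/I$. Comparing $\bar u^-\cdot\bar h\cdot\bar u^+=1$ with the trivial factorization then forces $\bar u^-=\bar h=\bar u^+=1$.

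It remains to convert these triviality conditions into membership in the relative subgroups. The unipotent group $\U(\Sigma_s^\pm,R)$ is parameterized as an affine space by its root-subgroup coordinates $(\xi_\alpha)_{\alpha\in\Sigma_s^\pm}$, and reduction modulo $I$ acts coordinate-wise; hence $\bar u^\pm=1$ forces every coordinate into $I$, giving $u^\pm\in\U(\Sigma_s^\pm,I)$. For the middle factor, $\bar h=1$ in $\G_{sc}(\Delta_s,R/I)$ is precisely the defining condition for $h\in\G_{sc}(\Delta_s,R,I)$, once one notes that the reduction map is compatible with the subsystem embedding $\G_{sc}(\Delta_s,R)\hookrightarrow\G_{sc}(\Phi,R)$. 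I expect the only nontrivial point to be the uniqueness of the big-cell factorization over $R/I$; this is a standard consequence of the functoriality of the universal Chevalley scheme, and the remainder of the argument is formal parameter-chasing.
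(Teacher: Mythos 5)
Your proposal is correct and follows exactly the route the paper intends: the paper gives no separate proof, merely noting that the relative statement "can be deduced easily from the absolute statement" of Stein (\cite[Theorem~1.3]{St78}), and your argument --- apply the absolute decomposition, reduce modulo $I$, and use uniqueness of the big-cell factorization to force each factor into the corresponding relative subgroup --- is precisely that easy deduction. The only point you gloss over (injectivity of $\G_{sc}(\Delta_s, R/I)\to\G_{sc}(\Phi, R/I)$, needed to identify $\bar h=1$ with $h\in\G_{sc}(\Delta_s,R,I)$) is standard for Levi subsystems of a simply-connected group and does not affect correctness.
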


\section{Stability conditions}\label{sec:stability-conditions}
In this section we define the stability conditions used in the statements of our decomposition theorems in section~\ref{sec:factorizations}.
First, we recall the definition of the stable rank of an ideal introduced by L.~Vaserstein in~\cite{Va69, Va71}.
As we will be mainly concerned with applications to Chevalley groups, our exposition is limited to commutative rings.
Next, in section~\ref{sec:rel-asr} we define relative version of the absolute stable range condition introduced in~\cite{EO, MKV}.
Finally, we formulate and prove several technical lemmas about the action of certain unipotent subgroups on unimodular columns under suitable stability assumptions.

\subsection{Relative stable rank}
Recall that a row $a\in{}^n\!R$ is called \emph{$I$-unimodular} if it is congruent to $(1, 0, \ldots, 0)$ modulo $I$ and its components $a_1, a_2, \ldots, a_n$ generate $R$ as an ideal.
A column $b \in R^n$ is called $I$-unimodular if its transpose $b^t$ is an $I$-unimodular row.
We denote the set of all $I$-unimodular rows (resp. columns) by $\Umd(n, I)$ (resp. $\Ums(n, I)$).
We refer to $R$-unimodular rows and columns as simply \emph{unimodular}.
It is not hard to show that for an $I$-unimodular row $a$ there exists an $I$-unimodular column $b$ such that $ab=1$ (see~\cite[\S2]{Va69}).

An $I$-unimodular row $a=(a_1, \ldots, a_{n+1})$ is called \emph{stable} if one can choose $b_1, \ldots, b_n\in I$ such that the row $(a_1+a_{n+1}b_1, \ldots, a_n+a_{n+1}b_n)$ is also $I$-unimodular. 
We say that $I$ satisfies the condition $\SR_n(I)$ if any $I$-unimodular row of length $n+1$ is stable.
For $m \geqslant n$ the condition $\SR_n(I)$ implies $\SR_m(I)$, see~\cite[Theorem~1]{Va71}.
It is also clear that the condition $\SR_n(I)$ does not depend on the choice of the ring $R$.
By definition, the \emph{stable rank} of $I$ (denoted $\sr(I)$) is the smallest natural number $n$ such that $\SR_n(I)$ holds (we set $\sr(I)=\infty$ if no such $n$ exists).

\subsection{Relativization of the absolute stable rank}\label{sec:rel-asr}
For a row $a=(a_1, \ldots, a_n)\in{}^n\!R$ denote by $\mathfrak{J}(a)$ the intersection of all maximal ideals of $R$ containing $a_1, \ldots, a_n$.
It is easy to see that a row $a\in R^n$ is unimodular if and only if $\mathfrak{J}(a)=R$. 
Clearly, for any $g\in\GL(n,R)$ one has $\mathfrak{J}(a\cdot g)=\mathfrak{J}(a)$.
\begin{dfn}\label{dfn:j-stable}
We say that a row $a=(a_1, \ldots, a_{n+1})\in{}^{n+1}\!I$ can be \emph{$I$-shortened}, if there exist $c_1, \ldots, c_n\in I$ such that
$\mathfrak{J}(a_1, \ldots, a_{n+1})=\mathfrak{J}(a_1+c_1a_{n+1}, \ldots, a_n+c_na_{n+1}).$
\end{dfn}
\begin{dfn}\label{dfn:asr}
We say that an ideal $I$ satisfies the condition $\ASR_n(I)$ if it satisfies $\SR_n(I)$ and, moreover, any row $a\in{}^{n+1}\!I$ can be $I$-shortened.
\end{dfn}

It is easy to see that $\ASR_m(I)$ implies $\ASR_n(I)$ for any $n\geqslant m$. 
By definition, the \emph{absolute relative stable rank} $\asr(I)$ is the smallest natural $n$ such that $\ASR_n(I)$ holds (again we set $\asr(I)=\infty$ if no such $n$ exists).

A priori our definition of $\ASR_n(I)$ depends on $R$.
Below we will see that in fact there is no such dependence.
The following lemma is a relative version of~\cite[Lemma~8.2]{MKV}. 
\begin{lemma}\label{lemma:relative-asr-unimod}
For a commutative ring $R$ and an ideal $I \trianglelefteq R$ the following statements are equivalent:
\begin{lemlist} 
\item\label{asr-j-stable} Any row $a\in{}^{n+1}\!I$ can be $I$-shortened;
\item\label{asr-bak-like} For any $I$-unimodular row $(b, a_1, \ldots, a_n, d)\in\Umd(n+2, I)$ there exist $c_1, \ldots, c_n\in I$ 
 such that $(b+b', a_1+c_1d, \ldots, a_n+c_nd)$ is $I$-unimodular for any $b'\in J$, where $J=I \cdot a_1 + \ldots + I \cdot a_n + I \cdot d\leqslant I$.
\end{lemlist} 
\end{lemma}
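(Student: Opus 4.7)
The plan is to prove both implications by rephrasing the $I$-shortening condition in terms of the sets of maximal ideals containing the relevant entries, and then playing $I$-unimodularity against the elementary observation that for any maximal ideal $\mathfrak{m}$ the image $(I + \mathfrak{m})/\mathfrak{m}$ equals either $0$ or the whole field $R/\mathfrak{m}$.

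For (i) $\Rightarrow$ (ii), I would apply (i) to the truncated row $(a_1, \ldots, a_n, d) \in {}^{n+1}\!I$ to obtain $c_1, \ldots, c_n \in I$ with $\mathfrak{J}(a_1 + c_1 d, \ldots, a_n + c_n d) = \mathfrak{J}(a_1, \ldots, a_n, d)$, and verify that these same $c_i$ witness (ii). Indeed, if some maximal ideal $\mathfrak{m}$ contained every entry of $(b + b', a_1 + c_1 d, \ldots, a_n + c_n d)$, then it would contain the shortened row's $\mathfrak{J}$, hence each of $a_1, \ldots, a_n, d$; combined with $b' \in J \subseteq \mathfrak{m}$ this would force $b = (b + b') - b' \in \mathfrak{m}$, contradicting the unimodularity of $(b, a_1, \ldots, a_n, d)$.

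For the reverse direction, I would apply (ii) to the padded $I$-unimodular row $(1, a_1, \ldots, a_n, a_{n+1})$, producing $c_i \in I$ such that $(1 + b', a_1 + c_1 a_{n+1}, \ldots, a_n + c_n a_{n+1})$ is $I$-unimodular for every $b' \in J$, and claim that the same $c_i$ realize an $I$-shortening of $(a_1, \ldots, a_{n+1})$. The inclusion $\mathfrak{J}(a_1 + c_1 a_{n+1}, \ldots, a_n + c_n a_{n+1}) \subseteq \mathfrak{J}(a)$ is automatic; the nontrivial direction asks that every maximal ideal $\mathfrak{m}$ containing all $a_i + c_i a_{n+1}$ also contain the whole original row $a$. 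This is clear either if $a_{n+1} \in \mathfrak{m}$ (then $a_i = (a_i + c_i a_{n+1}) - c_i a_{n+1} \in \mathfrak{m}$) or if $I \subseteq \mathfrak{m}$ (then all $a_i \in I \subseteq \mathfrak{m}$).

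The one case that requires actual work, and which I expect to be the main obstacle, is $a_{n+1} \notin \mathfrak{m}$ together with $I \not\subseteq \mathfrak{m}$. Here I would rule out the configuration by constructing a bad $b' \in J$. Since $(I + \mathfrak{m})/\mathfrak{m}$ is a nonzero ideal of the field $R/\mathfrak{m}$, it equals $R/\mathfrak{m}$, and because $a_{n+1}$ is invertible modulo $\mathfrak{m}$, the image of the sub-ideal $I \cdot a_{n+1} \subseteq J$ in $R/\mathfrak{m}$ is likewise surjective. Picking $b' \in J$ with $1 + b' \in \mathfrak{m}$ then places every entry of $(1 + b', a_1 + c_1 a_{n+1}, \ldots, a_n + c_n a_{n+1})$ in $\mathfrak{m}$, contradicting the $I$-unimodularity supplied by (ii) and finishing the proof.
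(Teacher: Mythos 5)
Your proof is correct and follows essentially the same route as the paper: for (i)$\Rightarrow$(ii) you shorten the truncated row $(a_1,\ldots,a_n,d)$ and observe that the same $c_i$ work for every $b'\in J$ because any offending maximal ideal would contain $\mathfrak{J}(a_1,\ldots,a_n,d)\supseteq J$ and hence $b$; for (ii)$\Rightarrow$(i) you pad with a leading $1$ and, in the only nontrivial case $a_{n+1}\notin\mathfrak{m}$, $I\not\subseteq\mathfrak{m}$, choose $b'\in I\cdot a_{n+1}\subseteq J$ with $1+b'\in\mathfrak{m}$ to contradict unimodularity, exactly as in the paper's argument.
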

\begin{proof}
Assume first that any row $a\in{}^{n+1}\!I$ can be $I$-shortened. 
In particular, for a given $I$-unimodular row $(b, a_1, \ldots, a_n, d)\in\Umd(n+2, I)$ there exist $c_1, \ldots, c_n$ such that
\[\mathfrak{J}(a_1, \ldots, a_{n+1})=\mathfrak{J}(a_1+c_1a_{n+1}, \ldots, a_n+c_na_{n+1}).\]
Therefore $(b,a_1+c_1d,\ldots,a_n+c_nd)$ is also unimodular. Of course, for any $b'\in J$ we could replace $b$ with $b+b'$ from the very start.

To show the converse take an arbitrary row $(a_1, \ldots, a_n, d)\in{}^{n+1}\!I$ and consider the $I$-unimodular row $(1, a_1, \ldots, a_n, d)\in\Umd(n+2, I).$
By the hypothesis, there exist $c_1, \ldots, c_n\in I$ such that
\[ v=(1+b', a'_1, \ldots, a'_n)=(1+b', a_1+c_1d, \ldots, a_n+c_nd) \]
is unimodular for any $b'\in J$.
Assume that there exists a maximal ideal $\mathfrak{m}\trianglelefteq R$ such that all $a'_1, \ldots, a'_n$ are contained in $\mathfrak{m}$, but at least one of the elements $d$, $a_i$ is not.
Then clearly $d\notin\mathfrak{m}$ and $I\not\subseteq \mathfrak{m}$ (otherwise $a_i=a'_i-c_id\in\mathfrak{m}$, contrary to the assumption).
Now we can find $t\in I$ such that its image $\bar{t}$ in the residue field $R/\mathfrak{m}$ equals $-\bar{1}/\bar{d}$.
This means that $1 + b' \in \mathfrak{m}$ for $b'=td\in J$, which contradicts the unimodularity of $v$.
This shows that no such $\mathfrak{m}$ may exist and, therefore, $\mathfrak{J}(a'_1, \ldots, a'_n)=\mathfrak{J}(a_1, \ldots, a_n, d)$.
\end{proof}

Obviously, the second statement of \cref{lemma:relative-asr-unimod} does not depend on $R$, hence, as suggested by the notation, $\asr(I)$ is independent of $R$.

Let $R$ be a commutative ring. We denote by $\Max(R)$ its \emph{maximum spectrum}, i.e. the set of maximal ideals of $R$, equipped with the Zariski topology. For a topological space $X$ denote by $\dim(X)$ its usual topological dimension.
From the definition of $\asr(I)$ and \cite[Theorem~2.3]{EO} (or~\cite[Theorem~3.7]{MKV}) it follows that
\begin{equation} \label{sr-estimates} \sr(I)\leqslant\asr(I)\leqslant\asr(R)\leqslant \dim(\Max(R))+1\leqslant\dim(\Spec(R))+1. \end{equation}

\subsection{Action of unipotent radicals}\label{sec:ur-action}
In this section we work with natural representations of classical groups (i.\,e. representations with highest weight $\varpi_1$).
It will be convenient for us to number the weights of these representations as in~\cite[\S~1B]{St78}:
For example, we write $1$ instead of $\varpi_1$, $2$ instead of $\varpi_1-\alpha_1$ etc.

Let $\lambda_1, \lambda_2 \in \Lambda(\pi)$ be a pair of weights of a representation $\pi$ such that $\lambda_1-\lambda_2\in \Phi$.
For notational convenience we write $x_{\lambda_1, \lambda_2}(\xi)$ instead of $x_{\lambda_1-\lambda_2}(\xi)$.
For example, for $\Phi=\rA_\ell$ the notation $x_{1, 2}(\xi)$ has the same meaning as $x_{\varpi_1 - \varpi_1 + \alpha_1}(\xi) = x_{\alpha_1}(\xi)$.

\begin{lemma}\label{lemma:PSV-symplectic-trick}
 Let $v=(v_1, \ldots, v_n)^t$ be a column. Denote by $v'$ the vector composed of squares of the components of $v$, i.\,e. $v'=(v_1^2, \ldots, v_n^2)^t$.
 Then for any matrix $b \in M(n, I)$ one can find a symmetric matrix $a \in M(n, I)$, $a=a^t$ such that $b \cdot v' = a \cdot v$. \end{lemma}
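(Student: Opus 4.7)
The plan is to construct the symmetric matrix $a$ by hand. The motivating observation is that the naive matrix $a^0$ defined by $a^0_{ij} = b_{ij} v_j$ satisfies $a^0 v = b v'$ tautologically but fails to be symmetric in general. So the task is really to produce a symmetric $a$ that differs from $a^0$ by a matrix killing $v$, which suggests symmetrizing the off-diagonal entries and then repairing the diagonal to compensate.

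Concretely, I would try the explicit formula
\[ a_{ij} = b_{ij} v_j + b_{ji} v_i \quad (i \neq j), \qquad a_{ii} = b_{ii} v_i - \sum_{j \neq i} b_{ji} v_j, \]
which is manifestly symmetric and lies in $\M(n, I)$ since $b \in \M(n, I)$ and $v \in R^n$. The off-diagonal part alone contributes $\sum_{j \neq i}(b_{ij} v_j^2 + b_{ji} v_i v_j)$ to row $i$ of $a v$, which yields the desired $\sum_{j \neq i} b_{ij} v_j^2$ together with an unwanted cross-term $\sum_{j \neq i} b_{ji} v_i v_j$. The chosen diagonal $a_{ii}$ is designed precisely to absorb this cross-term and to supply the missing $b_{ii} v_i^2$, so a short expansion then gives $(av)_i = \sum_j b_{ij} v_j^2 = (bv')_i$.

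There is no real obstacle beyond spotting the correct symmetrization; once the formula is written down, the argument is a single algebraic identity over the commutative ring $R$, and the ideal hypothesis $b \in \M(n, I)$ enters only to guarantee $a \in \M(n, I)$. No stability condition, Steinberg relation, or structural input about $\G(\Phi, R)$ is used — this is purely a matrix-level lemma that will later feed into symplectic-type computations.
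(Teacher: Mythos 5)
Your proposal is correct and coincides with the paper's proof: the paper verifies the lemma with exactly the symmetric matrix $a_{ij} = b_{ij}v_j + b_{ji}v_i$ ($i\neq j$), $a_{ii} = b_{ii}v_i - \sum_{j\neq i} b_{ji}v_j$, checked by the same direct expansion. No differences worth noting.
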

\begin{proof}
Straightforward computation shows that the assertion of lemma holds for the matrix $a$ defined by
\begin{equation*}
a_{ij} = b_{ij} v_{j} + b_{ji} v_{i},\ j\neq i,\quad a_{ii} = b_{ii} v_{i} - \sum\limits_{\mathclap{j=1,\ j\neq i}}^n b_{ji} v_{j}. \qedhere
\end{equation*}
\end{proof}

Let $v\in V=R^{2\ell}$ be a vector of the natural representation of $\G(\rD_\ell, R)$.
Denote by $v_+$ and $v_-$ the upper and the lower halves of $v$, i.\,e. $v_+=(v_1, \ldots, v_\ell)^t$, $v_-=(v_{-\ell}, \ldots, v_{-1})^t$.
\begin{lemma}\label{lemma:asrUnip}
Assume that one of the following holds:
\begin{lemlist}
\item \label{item:asrUnipC} $\Phi=\rC_\ell$ and $\sr(I) \leqslant \ell$;
\item \label{item:asrUnipD} $\Phi=\rD_\ell$ and $\asr(I)\leqslant \ell -1$.
\end{lemlist}
Then for any $I$-unimodular column $v=(v_+, v_-)^t\in\Ums(2\ell, I)$ there exists $g\in\U(\Sigma^+_\ell, I) \leq \E(\Phi, R, I)$ 
such that $(g \cdot v)_+ \in \Ums(\ell, I)$.
\end{lemma}
\begin{proof} 
\textsc{Case $\Phi=\rC_\ell$.}
Denote by $p$ the matrix of size $\ell$ such that its only nonzero entries equal $1$ and are located on the skew-diagonal, i.\,e. $p_{ij}=\delta_{i, \ell-j+1}$. 
For $b \in M(\ell, I)$ set $g(b)=\left(\begin{smallmatrix} e_\ell & p \cdot b \\ 0 & e_{\ell} \end{smallmatrix}\right)$.
Clearly, if $b$ is symmetric then $g(b)$ lies in $\U(\Sigma_\ell^+, I)\leq \E(\rC_\ell, R, I)$.

Notice that the column $v'=(v_1,\ldots,v_\ell, v_{-\ell}^2,\ldots,v_{-1}^2)^t$ is $I$-unimodular.
By the definition of the relative stable rank we can find a matrix $b\in \M(\ell,I)$ such that the upper half $v''_+$ of the vector $v''= g(b) \cdot v'$ is $I$-unimodular.
It is clear that $v''_+ = v_+ + pb v'_-$. 
Finally, applying \cref{lemma:PSV-symplectic-trick}, we find a symmetric matrix $a$ such that
\[ (g(a)\cdot v)_+=v_+ + pav_- = v_+ + pbv'_- = v''_+ \in \Ums(\ell, I). \]

\textsc{Case $\Phi=\rD_\ell$.} Denote by $J$ the ideal of $R$ spanned by the components of $v_{-}$. Clearly, $J \subseteq I$.
Since $\sr(I/J) \leqslant \ell-1$, the elementary group $\E(\rA_{\ell-1}, R/J, I/J)$ acts transitively on $\Ums(\ell, I/J)$ (see e.\,g.~\cite[Theorem~2.3c]{Va69}). 
This implies the existence of an element $h\in \E(\Delta_\ell, R, I)$ such that the vector $v' = h \cdot v$ satisfies $v'_i \equiv \delta_{i1} \pmod J$ for $i=1, \ldots, \ell$.

Clearly, $(v'_1, v'_{-\ell}, \ldots, v'_{-1})^t$ is $I$-unimodular.
Applying \cref{asr-bak-like}, we find $c_2, \ldots, c_\ell\in I$ such that for $v''= \prod_{i=2}^{\ell}x_{-i, -1}(c_i)\cdot v'$ one has
$(v''_1, v''_{-\ell},\ldots, v''_{-2})^t\in\Ums(\ell+1, I)$.
Now, applying the condition $\sr(I) \leqslant \ell-1$ once again, we find
$d_1, d_3, \ldots, d_{\ell}\in I$ such that the entries $(v'''_1, v'''_{-\ell}, \ldots, v'''_{-3})^t$
of $v'''=x_{1, -2}(d_1) \cdot \prod_{i=3}^{\ell} x_{-i, -2}(d_i) \cdot v''$ form an $I$-unimodular column.

We can find $f_1, f_3,\ldots, f_\ell \in R$ such that $f_1v'''_1+\sum_{i=3}^\ell f_i v'''_{-i} = 1$.
Set $\xi = v'''_1-v'''_2-1 \in I$, $v^{(4)}=x_{2, 1}(\xi f_1) \cdot \prod_{i=3}^\ell x_{2,-i}(\xi f_i) \cdot v'''$.
Clearly $v^{(4)}_2 = v^{(4)}_1-1$, therefore $v^{(4)}_+$ is $I$-unimodular.
Summarizing the above, we have found $g\in \EP_\ell(R, I)$ such that $v^{(4)}=g \cdot v$
and the assertion of the lemma immediately follows from the Levi decomposition. \end{proof}

\begin{lemma} \label{lemma:uraction} 
Let $\Phi=\rA_\ell, \rC_\ell, \rD_\ell$. Denote by $\pi$ the natural representation of $\G(\Phi, R)$ on $V=R^n$, $n=\ell+1, 2\ell, 2\ell$ respectively.
Assume that one of the following assumptions holds:
\begin{lemlist}
 \item \label{item:uractionA} $\Phi=\rA_\ell$, $\Gamma=\{ k+1, \ldots, \ell+1\} \subset \Lambda(\pi)$ and $\sr(I)\leq k\leq \ell$;
 \item \label{item:uractionC} $\Phi=\rC_\ell$, $\Gamma=\{-\ell,\ldots, -2, -1\} \subset \Lambda(\pi)$ and $\sr(I)\leq \ell$;
 \item \label{item:uractionD} $\Phi=\rD_\ell$, $\Gamma=\{-\ell,\ldots, -2, -1\} \subset \Lambda(\pi)$ and $\asr(I)\leq \ell-1$. 
\end{lemlist}
Then for any $g\in \G(\Phi, R, I)$ there exist $x\in \U(\Phi^+, I)$, $y\in \U(\Phi^-, I)$ such that $(yxg \cdot v^+)_\lambda = 0$ for all $\lambda\in \Gamma$.
\end{lemma}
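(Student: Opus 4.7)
Set $v:=g\cdot v^+\in V$. Since $g\equiv 1\pmod{I}$, the coordinates of $v$ in the weight basis agree with those of $v^+$ modulo $I$; in particular $v$ is $I$-unimodular, and every coordinate $v_\lambda$ with $\lambda\neq\varpi_1$ lies in $I$. The plan is to first apply some $x\in\U(\Phi^+, I)$ so that the coordinates of $xv$ indexed by weights \emph{outside} $\Gamma$ form an $I$-unimodular tuple, and then to produce $y\in\U(\Phi^-, I)$ that kills the $\Gamma$-indexed coordinates by expressing them as $I$-linear combinations of the newly unimodular block.

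\emph{Step 1 (producing the unimodular non-$\Gamma$ block).} In case~\cref{item:uractionA}, iterated application of $\SR_k(I)$ to the $I$-unimodular column of length $\ell+1$ yields at each stage an upper-triangular transvection $x_{i,j}(\xi)$ with $i<j$ and $\xi\in I$ (hence an element of $\U(\Phi^+, I)$) that shortens the unimodular prefix; after finitely many steps the first $k$ coordinates of $xv$ become $I$-unimodular. In cases~\cref{item:uractionC,item:uractionD} the required $x\in\U(\Sigma_\ell^+, I)\subseteq\U(\Phi^+, I)$ is supplied verbatim by the corresponding item of \cref{lemma:asrUnip}.

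\emph{Step 2 (annihilating the $\Gamma$-block).} Let $u$ denote the non-$\Gamma$ sub-column of $xv$ and $w$ the $\Gamma$-sub-column. By Step~1 the column $u$ is $I$-unimodular and, since $xv\equiv v^+\pmod{I}$, the entries of $w$ lie in $I$. Choose $a$ with $a^t u=1$. In case~\cref{item:uractionA} the transvections $x_{j,i}\!\bigl(-(xv)_j a_i\bigr)$ for $j\in\Gamma$, $i\notin\Gamma$ all commute (their row- and column-indices lie in disjoint ranges), and their product $y\in\U(\Phi^-, I)$ replaces $(xv)_j$ by $(xv)_j-(xv)_j\sum_i a_i u_i=0$ for each $j\in\Gamma$. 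In case~\cref{item:uractionC} one takes $y$ to be the element of $\U(\Sigma_\ell^-, I)\subseteq\U(\Phi^-, I)$ corresponding to the \emph{symmetric} matrix
\[ C=-wa^t-aw^t+(w^tu)\,aa^t\in\M(\ell, I), \]
for which a direct computation gives $Cu=-w$. In case~\cref{item:uractionD} the relevant $C$ must additionally satisfy the antisymmetry condition $pC+C^tp=0$ (where $p$ is the form matrix of the $\SO_{2\ell}$-pairing between the upper and lower halves); the isotropy relation $u^tpw=0$, inherited from $q(v^+)=0$ via $g,x\in\SO(2\ell, R)$, ensures that $C=p\tilde C$ with $\tilde C=-(pw)a^t+a(pw)^t$ is antisymmetric, has entries in $I$, and solves $Cu=-w$.

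The delicate point is Step~2 in case~\cref{item:uractionD}: the antisymmetry constraint renders the naive transpose of the type-$\rC$ construction inadequate, and it is precisely the invariance of the quadratic form (forcing $u^tpw=0$) that rescues the antisymmetric ansatz. No further stability hypothesis beyond $\asr(I)\leqslant\ell-1$ (already used in Step~1 through \cref{item:asrUnipD}) is needed, and by construction $(yxg\cdot v^+)_\lambda=(yxv)_\lambda=0$ for every $\lambda\in\Gamma$.
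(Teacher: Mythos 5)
Your proof is correct, and in the two non-linear cases it takes a genuinely different route from the paper. Case \cref{item:uractionA} coincides with the paper's argument (stable range via an upper unipotent, then clearing the tail by lower transvections with parameters $-(xv)_j a_i\in I$). In case \cref{item:uractionC} the paper, after the same Step~1 via \cref{item:asrUnipC}, passes through the column of squares $({v'_1}^2,\ldots,{v'_\ell}^2)$ and invokes \cref{lemma:PSV-symplectic-trick} to symmetrize; you instead write down the symmetric matrix $C=-wa^t-aw^t+(w^tu)\,aa^t$ directly from a splitting $a^tu=1$, which is shorter and avoids the squaring trick at this stage (it is still used inside \cref{item:asrUnipC}, which you cite). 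In case \cref{item:uractionD} the divergence is more substantial: the paper does not clear the lower half by an explicit element of the opposite radical at all; it appeals to the \emph{proof} (not just the statement) of \cref{item:asrUnipD} to arrange $v'_2=v'_1-1$, hits the vector with $z_{-\alpha_2}(-v'_2,1)$ to make the leading coordinate $1$, applies \cref{lemma:Chevalley-Matsumoto}, and then extracts $x\in\U(\Sigma_\ell^+,I)$, $y\in\U(\Sigma_\ell^-,I)$ from the Levi decomposition, the point being that the remaining Levi factor preserves the vanishing of the lower half. Your argument uses only the statement of \cref{item:asrUnipD} and then solves the linear problem inside the abelian radical by the alternating ansatz $p\bigl(-(pw)a^t+a(pw)^t\bigr)$, correctly identifying the isotropy relation $u^tpw=0$ (invariance of the quadratic form) as the fact that makes the alternating constraint compatible; this is a clean self-contained alternative and arguably tidier than the paper's ``from the proof of\ldots'' reference. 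One cosmetic caveat: in the paper's coordinates (weights ordered $1,\ldots,\ell,-\ell,\ldots,-1$) the radical elements are $\bigl(\begin{smallmatrix} e & 0\\ pa & e\end{smallmatrix}\bigr)$ with $a$ symmetric, so in case \cref{item:uractionC} the equation to solve is $Cu=-pw$ rather than $Cu=-w$; your formula handles this verbatim after replacing $w$ by $pw$ (exactly as you already do in type $\rD$), so this is a bookkeeping adjustment, not a gap.
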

\begin{proof} Denote by $v$ the first column of $g$ (i.\,e. $v=\pi(g)\cdot v^+$).

\textsc{Case $\Phi=\rA_\ell$.} From the definition of relative stable rank it follows that we can find 
$x= \left(\begin{smallmatrix} e_k & a \\ 0 & e_{n-k} \end{smallmatrix}\right) \in \U(\Sigma_k^+, I)$ such that 
the upper $k$ components of $v'= x \cdot v$ form an $I$-unimodular column. 
Now, to obtain zeroes at the desired positions it remains to subtract from $v'_{k+1},\ldots, v'_{\ell+1}$ suitable multiples of $v'_1, \ldots, v'_k$.
This operation corresponds to the left multiplication by some element $y\in\U(\Sigma_k^-, I)$.

\textsc{Case $\Phi=\rC_\ell$.} 
Applying \cref{item:asrUnipC}, we find $x \in \U(\Sigma_\ell^+, I)$ such that the upper half $v'_+$ of $v' = x \cdot v$ is unimodular.
Set $g(a) = \left(\begin{smallmatrix} e_\ell & 0 \\ p \cdot a & e_{\ell} \end{smallmatrix}\right)$.
Clearly, if $a$ is symmetric, then $g(a) \in \U(\Sigma_\ell^-, I)$.
Since the column $v''_+ = ({v'_1}^2, \ldots, {v'_\ell}^2)^t$ is $I$-unimodular, there exists a matrix $b \in M(\ell, I)$ such that $v'_- + p b v''_+ = 0$.
Finally, using \cref{lemma:PSV-symplectic-trick}, we find a symmetric matrix $a$ such that $(g(a) \cdot v')_- = p a v'_+ + v'_- = p b v''_+ + v'_- = 0$.

\textsc{Case $\Phi=\rD_\ell$.} From the proof of \cref{item:asrUnipD} it follows that there exists $h_1 \in \EP_\ell(R, I)$ such that for $v'=h_1\cdot v$ one has $v'_2=v'_1-1\in I$.
Clearly, for $v'' = z_{-\alpha_{1}}(-v'_2, -1)\cdot v'$ one has $v''_1=1$, hence by \cref{lemma:Chevalley-Matsumoto} there exists $h_2 \in \U(\Phi^-, I)$ such that the element
$g'=h_2 \cdot z_{-\alpha_{1}}(-v'_2, -1) \cdot h_1 \cdot g$ fixes $v^+$. 
Using the Levi decomposition we can write $g'=h \cdot y \cdot x \cdot g$ for some $y\in\U(\Sigma^-_\ell, I)$, $x \in \U(\Sigma^+_\ell, I)$, $h\in\E(\Delta_\ell, R, I)$.
It is clear that $x$, $y$ are the required elements.
\end{proof}

\section{Relative parabolic factorizations} \label{sec:factorizations}
In this section we formulate and prove relative versions of the decompositions from~\cite{St78}, which will be our main technical tools throughout the next section.

Let $G$ be a group and $A$ its subset. Denote by $L(A)$, $R(A)$ the left and the right stabilizers of $A$ and by $N(A)$ the normalizer of $A$:
\[ L(A) =  \left\{ g\in G\ \middle |\ g \cdot A = A \right\}\!,\ \ R(A) =  \left\{ g\in G\ \middle |\ A \cdot g = A \right\}\!,\ \ N(A) =  \left\{ g\in G\ \middle |\ g \cdot A \cdot g^{-1} = A \right\}.\]
It is easy to see that $L(A)$, $R(A)$ and $N(A)$ are subgroups of $G$. The following obvious lemma will be used several times in the sequel.
\begin{lemma} \label{lem:LN} The subgroup $L(A)$ is normalized by $N(A)$. Moreover, one has $R(A) \cap N(A) \subseteq L(A)$, $L(A) \cap N(A) \subseteq R(A)$. \end{lemma}
\begin{comment}
\begin{proof}
Let $h \in L(A)$, $g \in N(A)$. The first claim follows from the following chain of equalities:
\begin{equation} h^g \cdot A = g^{-1} \cdot h \cdot g \cdot A \cdot g^{-1} \cdot g = g^{-1} \cdot h \cdot A \cdot g = {A}^g = A. \end{equation} 
Now if $g \in R(A) \cap N(A)$, we obtain
$g \cdot A = g \cdot A \cdot g^{-1} \cdot g = A \cdot g = A,$ which implies the second claim.
\end{proof}
\end{comment}

\subsection{Relative Gauss decomposition}\label{sec:gauss}
The proof of the Gauss decomposition presented below is similar to the proof in the absolute case (cf.~\cite[Theorem~5.1]{Sm12}).

\begin{prop} \label{thm:Gauss}
Let $\Phi$ be a reduced irreducible root system of rank $\ell > 1$ and let $\Delta_1$, $\Delta_\ell$ be
the reductive subsystems of $\Phi$ corresponding to the endnodes of the Dynkin diagram of $\Phi$.
Suppose that both relative elementary subgroups $\E(\Delta_i, R, I)$, $i=1, \ell$ admit Gauss decomposition:
\[ \E(\Delta_i, R, I) = \Hh(\Delta_i, R, I) \cdot \U(\Delta^+_i, I) \cdot \U(\Delta^-_i, I) \cdot \U(\Delta^+_i, I), \quad i=1, \ell. \]
Then $\E(\Phi, R, I)$ also admits Gauss decomposition:
\begin{equation} \label{eq:gauss1} \E(\Phi, R, I) = \Hh(\Phi, R, I) \cdot \U(\Phi^+, I) \cdot \U(\Phi^-, I) \cdot \U(\Phi^+, I). \end{equation}
\end{prop}
\begin{proof}
For a closed subset of roots $S \subseteq \Phi^+$ set $A(S, I) = \U(S, I) \cdot \U(-S, I) \cdot \U(S, I)$ (here by $-S$ we denote the corresponding subset of opposite roots).
Notice that from Levi decomposition it follows that $A(\Phi^+, I) = A(\Delta_i^+, I) \cdot A(\Sigma_i^+, I)$.

Denote by $A$ the product of subgroups in the right-hand side of~\eqref{eq:gauss1}.
First of all, notice that for $h \in \Hh(\Phi, R, I)$, $\beta \in \Phi$ and $\xi \in R$ one has $x_\beta(\xi)^h = x_\beta(\xi + s\xi)$ for some $s\in I$.
From this and the assumption of the proposition we obtain for $\alpha \in \Delta_1 \cup \Delta_\ell$, $\xi \in R$ that
\begin{multline} \nonumber
 A^{x_\alpha(\xi)} = (\Hh(\Phi, R, I) \cdot A(\Phi^+, I))^{x_\alpha(\xi)} \subseteq (\Hh(\Phi, R, I) \cdot \E(\Delta_i, R, I) \cdot A(\Sigma_i^+, I))^{x_\alpha(\xi)} \subseteq \\
  \subseteq \Hh(\Phi, R, I) X_{\alpha}(I) \cdot \E(\Delta_i, R, I) \cdot A(\Sigma_i^+, I) \subseteq \Hh(\Phi, R, I) \cdot \Hh(\Delta_i, R, I) A(\Delta_i^+, I) \cdot A(\Sigma_i^+, I) \subseteq A.
\end{multline}
Thus, $A$ is normalized by root subgroups $X_\alpha(R)$, $\alpha \in \Delta_1 \cup \Delta_\ell$ and therefore is normalized by $\E(\Phi, R)$.
On the other hand, for $\beta \in \Phi^+$ we have
\begin{equation} \nonumber
 X_{\beta}(I) \cdot A \subseteq \Hh(\Phi, R, I) \cdot X_{\beta}(I) A(\Phi^+, I) \subseteq A.
\end{equation}
Thus, from $\U(\Phi^+, I) \subseteq L(A)$ and~\cref{lem:LN} we obtain that $\mathcal{X} \cdot A \subseteq A$ and hence
 that $\E(\Phi, R, I) \cdot A \subseteq A$.
\end{proof}

\begin{thm}\label{thm:srRI1}
Let $\Phi$ be a root system, let $I$ be an ideal of an arbitrary commutative ring $R$, and assume that $\sr(I)=1$.
Then the relative elementary Chevalley group $\E(\Phi, R, I)$ admits Gauss decomposition:
\[ \E(\Phi, R, I) = \Hh(\Phi, R, I) \cdot \U(\Phi^+, I) \cdot \U(\Phi^-, I) \cdot \U(\Phi^+, I). \]
\end{thm}
\begin{proof}
In view of \cref{thm:Gauss} it suffices to show that Gauss decomposition holds in the special case $\Phi=\rA_1$.
Let $g=\begin{psmallmatrix}a & b \\ c & d\end{psmallmatrix}$ be an element of $\SL(2, R, I)$.
The first column of $g$ is $I$-unimodular, therefore there exists $z\in I$ such that $a+cz\in R^*$.
Multiplying $g$ on the left by $x_{12}(z)$, we get a matrix $g'=x_{12}(z)\cdot g=\begin{psmallmatrix}a' & b' \\ c & d\end{psmallmatrix}$ with invertible element $a'$ in the top-left corner.
After multiplying $g'$ on the left by $x_{21}(-c/a')$ and on the right by $x_{12}(-b'/a')$ we get a diagonal matrix. 
Thus, we have obtained the sought Gauss decomposition for $g$:
\begin{equation*}
g=x_{12}(-z)\cdot x_{21}(c/a')\cdot
\begin{pmatrix} \varepsilon & 0 \\ 0 & 1/\varepsilon \end{pmatrix}
\cdot x_{12}(b'/a')=x_{12}(-z)\cdot
\begin{pmatrix} \varepsilon & 0 \\ 0 & 1/\varepsilon \end{pmatrix}
\cdot x_{21}(y) \cdot x_{12}(b'/a'), 
\end{equation*}
for some $\varepsilon\in 1+I$ and $y\in I$. \end{proof}

\subsection{Relative Dennis--Vaserstein decompositions}\label{sec:dennis-vaserstein}
Let $\Phi$ be an irreducible root system of rank $\ell\geq 2$ and $\{r, s\}$ be a pair of distinct endnodes of the Dynkin diagram of $\Phi$.

Before we proceed with the proof of~\cref{thm:DennisVaserstein} let us remark that in the special case $\ell=2$, $\sr(I)= 1$
 we already known that $\E(\Phi, R, I)$ admits Gauss decomposition.
It is not hard to see that Dennis--Vaserstein decomposition follows from Gauss decomposition.
Thus, in the sequel we may assume, without loss of generality, that $\ell > 2$.

From Levi decomposition~\eqref{rel:Levi-decomp} it follows that the following subsets of $\E(\Phi, R, I)$ are equal:
\begin{multline*}
\U(\Phi^+, I)\cdot \U(\Phi^-, I) \cdot \E(\Delta_r, R, I) \cdot \EP_s(R, I) = \\
= \U(\Sigma_r^+, I)\cdot \U(\Sigma^-_r, I) \cdot \E(\Delta_r, R, I) \cdot \EP_s(R, I) = \hspace{5em} \\
\hspace{5em} = \EP_r(R, I) \cdot \E(\Delta_s, R, I) \cdot \U(\Sigma_s^-, I)\cdot \U(\Sigma_s^+, I) = \\
= \EP_r(R, I) \cdot \U(\Sigma^-_r \cap \Sigma^-_s, I) \cdot \EP_s(R, I).
\end{multline*}
Denote the above subset by $A_{rs}$. 
Notice that the equality $A_{rs} = \E(\Phi, R, I)$ implies $A_{sr} = \E(\Phi, R, I)$, 
 therefore it suffices to consider only the possibilities for $\Phi$, $s$, $r$ listed in~\cref{table:dv-reps}.

\begin{table}[htb]
\[\begin{array}{l @{\qquad} l @{\qquad} c @{\quad} c @{\quad} c @{\qquad} c @{\qquad} c @{\qquad} c}
\Phi                                  & s    &r      & \mathrm{dim}(\pi) & \text{type} & \text{$\Delta_r$} & \mathrm{dim}(\pi') & |\Gamma|  \\ \hline\vphantom{\Bigl(}
\rA_\ell, \ \ell\geqslant 2           & 1    &\ell   & \ell+1         & \text{natural}       & \rA_{\ell-1}              & \ell           & 1  \\     
\rB_\ell, \ \ell\geqslant 2           & 1    &\ell   & 2\ell+1        & \text{natural}       & \rA_{\ell-1}              & \ell           & 1  \\     
\rC_\ell, \ \ell\geqslant 2           & 1    &\ell   & 2\ell          & \text{natural}       & \rA_{\ell-1}              & \ell           & 1  \\
\rD_\ell, \ \ell\geqslant 4           & 1    &\ell   & 2\ell          & \text{natural}       & \rA_{\ell-1}              & \ell           & 2  \\ 
\rD_\ell, \ \ell\geqslant 4           & \ell &\ell-1 & 2^{\ell-1}     & \text{half-spinor}   & \rA_{\ell-1}              & \ell           & \ell-2  \\
\rE_\ell, \ \ell=6, 7                 & \ell &2      & 27, 56         & \text{minimal}       & \rA_{\ell-1}              & \ell           & 3       \\ 
\rE_\ell, \ \ell=6, 7                 & \ell &1      & 27, 56         & \text{minimal}       & \rD_{\ell-1}              & 2(\ell-1)      & \ell-1 \\
\rF_4,                                & 4    & 1     & 26             & \text{minimal}       & \rC_{3}                   & 6              & 3 \end{array}\]
 \caption{List of representations used in the proof of \cref{thm:DennisVaserstein}.} \label{table:dv-reps}
\end{table}
For the proof~\cref{thm:DennisVaserstein} we need to show that the normalizer $N(A_{rs})$ is a sufficiently large subgroup.
This is accomplished in a series of lemmas below. For example, the following lemma describes root subgroups 
 which are contained in $N(A_{rs})$ for obvious reasons.
\begin{lemma}\label{lemma:dv-normal} 
For every $\alpha \in \Delta_{r, s} \cup \Phi^+ $ one has $X_\alpha(R) \subseteq N(A_{rs})$. \end{lemma}
\begin{proof}
Notice that for every $i$ the group $\EP_i(R, I)$ is normalized by $\EP_i(R)$, hence it is normalized by $X_\alpha(R)$, $\alpha \in S_i^+$.
Since $\E(\Delta_{r, s}, R)$ normalizes $\U(\Sigma_r^- \cap \Sigma_s^-, I)$, we obtain the required assertion for $\alpha \in \Delta_{r, s}$.

Now let $\alpha$ be a positive simple root. It is clear that $\alpha$ lies either in $\Delta_r$ or $\Delta_s$.
Assume, for example, the latter. Using Levi decomposition we obtain that $\U(\Sigma_r^- \cap \Sigma_s^-, I)^{X_\alpha(R)} \subseteq \U(\Sigma_s^-, I) \subseteq A_{rs}$, therefore $X_{\alpha}(R) \subseteq N(A_{rs})$.
Thus, $N(A_{rs})$ contains the subgroup $\U(\Phi^+, R)$ generated by $X_{\alpha}(R)$, $\alpha\in\Pi$, which completes the proof.
\end{proof}

\begin{lemma}\label{lemma:dv_unipotent} For any $1\leq i\leq n$ the following statements hold. 
\begin{thmlist} \item \label{item-dvu1} $\U(\Phi^+, I) = X_{\alpha_{i}}(I)\cdot \U(\Phi^+\setminus\{\alpha_{i}\}, I) = \U(\Phi^+\setminus\{\alpha_{i}\}, I)\cdot X_{\alpha_{i}}(I)$;
\item \label{item-dvu2} For any $\xi\in R$ one has $\U(\Phi^+\setminus\{\alpha_i\}, I)^{x_{-\alpha_{i}}(\xi)} \subseteq \U(\Phi^+, I)$;
\item \label{item-dvu3} $\U(\Phi^+, I)\cdot \U(\Phi^-, I) \subseteq \U(\Phi^+\setminus \{\alpha_i\}, I) \cdot \U(\Phi^-, I) \cdot X_{\alpha_{i}}(I) \cdot X_{-\alpha_{i}}(I)$.
\end{thmlist} \end{lemma}
\begin{proof} The first two assertions follow from Chevalley commutator formula, while the third one is a consequence of the first two. \end{proof}

The following lemma is the key point of the proof where stability assumptions are invoked.
\begin{lemma}\label{lemma:Stein_reduction}
Under the assumptions of \cref{thm:DennisVaserstein} one has $X_{-\alpha_r}(R) \subseteq N(A_{rs}).$
\end{lemma}
\begin{proof}
Let $\Phi$, $r$ and $s$ be as in Table~\ref{table:dv-reps} and let $\pi$ be the representation of $\G(\Phi, R)$ on the Weyl module $V(\mu)$ with highest weight $\mu=\varpi_s$.
The type and the dimension of $\pi$ are listed in Table~\ref{table:dv-reps}. %, see also~\cite{PSV98} where the weight diagrams of these representations are depicted.

Notice that $\Delta_r$ is an irreducible classical root system of type $\rA_{\ell-1}$, $\rC_{\ell-1}$ or $\rD_{\ell-1}$.
The restriction of $\pi$ to the subgroup $\G(\Delta_r, R)$ decomposes into a sum of Weyl modules for $\G(\Delta_r, R)$,
 moreover, each such summand corresponds to a connected component of the diagram obtained from the weight diagram of $\pi_R$ by removing all bonds marked $r$. 
Denote by $(\pi', V')$ the summand containing the highest weight vector $v^+$ of $\pi$,
 i.\,e. the summand corresponding to the top-left connected component of the diagram. Denote by $\Lambda(\pi') \subseteq \Lambda(\pi)$ the subset of weights belonging to this component.
In all cases under consideration, $\pi'$ is isomorphic to the natural representation of $\G(\Delta_r, R)$.

Denote by $\Gamma$ the subset of weights $\lambda \in \Lambda(\pi')$ such that $\lambda - \alpha_r \in \Lambda(\pi)$,
 in other words, $\Gamma$ consists of weights corresponding to vertices of the weight diagram which are incident to the removed bonds.
From the consideration of weight diagrams it is easy to determine the number of elements in $\Gamma$ (see~Table~\ref{table:dv-reps}).
Denote by $B$ the subset of $\E(\Delta_r, R, I)$ consisting of elements $g$ such that $(g \cdot v^+)_\lambda = 0$ for all $\lambda\in\Gamma$ and
set $A\coloneqq\U(\Phi^+, I)\cdot \U(\Phi^-, I) \cdot B \cdot \EP_s(R, I) \subseteq A_{rs}.$

We claim that $A = A_{rs}$. Indeed, let $g$ be an element of $\E(\Delta_r, R, I)$. 
Applying \cref{lemma:uraction} to the subsystem $\Delta_r$, we find $x\in\U(\Delta_r^+, I)$ and $y\in \U(\Delta_r^-, I)$ such that $yx\cdot g \in B$.
Consequently we obtain the required inclusion:
\begin{equation*} \U(\Sigma^+_r, I) \cdot \U(\Sigma^-_r, I) \cdot g = \U(\Sigma^+_r, I) x^{-1} \cdot \U(\Sigma^-_r, I)^{x^{-1}} y^{-1} (yxg) \subseteq \U(\Phi^+, I) \cdot \U(\Phi^-, I) \cdot B. \end{equation*}

Notice that by the definition of $\Gamma$ and Matsumoto lemma~\cite[Lemma~2.3]{Ma69} for any $s\in I$, $ g\in B$ one has $x_{-\alpha_r}(s) \cdot g \cdot v^+ = g \cdot v^+$, therefore
\[ X_{-\alpha_{r}}(I)^{B} \subseteq \U(\Phi^-, I) \cap \Stab(v^+) \subseteq \U(\Delta_s^-, I) \subseteq \EP_s(R, I). \]
From the above inclusion we immediately obtain that
\begin{equation*} X_{\alpha_r}(I) \cdot X_{-\alpha_r}(I) \cdot B \cdot \EP_s(R, I) \subseteq X_{\alpha_r}(I) \cdot B \cdot \EP_s(R, I) \subseteq B \cdot \U(\Sigma_r^+, I) \cdot \EP_s(R, I) = B \cdot \EP_s(R, I), \end{equation*}
which together with the third statement of \cref{lemma:dv_unipotent} implies that
\begin{equation*} \label{rel:sred}
A = \U(\Phi^+\setminus\{\alpha_r\}, I) \cdot \U(\Phi^-, I) \cdot B \cdot \EP_s(R, I).
\end{equation*}
Since $[B, X_{-\alpha_r}(R)] \subseteq \U(\Sigma_r^-, R) \cap \E(\Phi, R, I) = \U(\Sigma_r^-, I)$, we obtain the assertion of the lemma, indeed:
\begin{equation*} \label{rel:ninv} A^{X_{-\alpha_{r}}(R)} = \U(\Phi^+, I) \cdot \U(\Phi^-, I) \cdot B ^{X_{-\alpha_{r}}(R)} \cdot \EP_s(R, I) = A. \qedhere \end{equation*}
\end{proof}
We also need a separate lemma to deal with the symplectic case.
\begin{lemma}\label{lemma:DVST}
Assume that $\Phi=\rC_\ell$, $s=1$, $r=\ell$ and that $\sr(I) \leq \ell-1$. Then $X_{-\alpha_1}(R) \subseteq N(A_{\ell 1})$.
\end{lemma}
\begin{proof}
Denote by $C$ the set consisting of elements $g \in \EP_1(R, I)$ for which matrix entries $(g_{i, 2})$, $i=2, \ldots, \ell$ form an $I$-unimodular column of height $\ell-1$.
Set $A' = \EP_\ell(R, I) \cdot \U(\Sigma_1^- \cap \Sigma_\ell^-, I) \cdot C$.
Applying \cref{item:asrUnipC} we can find for every $g \in \EP_1(R, I)$ some element $x \in \U(\Sigma_\ell^+ \cap \Delta_1, I)$ such that $xg \in C$.  
The equality $A_{\ell 1} = A'$ follows from this, indeed, for $g\in \EP_1(R, I)$ one has
\begin{equation*} \EP_\ell(R, I) \cdot \U(\Sigma_1^- \cap \Sigma_\ell^-, I) \cdot g \subseteq 
 \EP_\ell(R, I)x^{-1}  \cdot \U(\Sigma_1^-, I) \cdot xg \subseteq A'. \end{equation*}

By the definition of $C$, for every $g \in C$ one can choose $y \in \U(\Sigma_1^+ \cap \Delta_\ell, I)$ such that $(y \cdot g)_{1, 2} = 0$.
Consequently, for every $g\in C$ one has
\[
 \EP_\ell(R, I) \cdot \U(\Sigma_1^- \cap \Sigma_\ell^-, I) \cdot g \subseteq \EP_\ell(R, I) y^{-1} \cdot \U(\Sigma_\ell^-\cap \Sigma_1^-, I)^{y^{-1}} \cdot y g
  \subseteq \EP_\ell(R, I) \cdot \U(\Sigma_\ell^-, I) \cdot y g.
\]
Notice that the matrix entry $(yg)_{1, 1}$ is invertible.
From the choice of $y$ it follows that for every $\xi\in R$ the element $g_1 \coloneqq (yg)^{x_{-\alpha_1}(\xi)}$
satisfies the assumptions of \cref{lemma:Chevalley-Matsumoto} and therefore can be rewritten as $g_1 = uh$ for some $u \in \U(\Sigma_1^-, I)$, $h \in \EP_1(R, I)$.
Thus, we obtain the assertion of the lemma, indeed:
\begin{multline*} {A_{\ell 1}}^{X_{-\alpha_{1}}(R)} \subseteq \EP_\ell(R, I)^{X_{-\alpha_1}(R)} \cdot \U(\Sigma_\ell^-, I)^{X_{-\alpha_1}(R)} \cdot \U(\Sigma_1^-, I) \cdot \EP_1(R, I) \subseteq \\
\subseteq \EP_\ell(R, I) \cdot \U(\Phi^-, I) \cdot \EP_1(R, I) \subseteq A_{\ell 1}. \qedhere \end{multline*}
\end{proof}

Now we are all set to prove the main result of this subsection.
\begin{proof} [Proof of \cref{thm:DennisVaserstein}]
From Lemmas~\ref{lemma:Stein_reduction} and~\ref{lemma:dv-normal} it follows that $N(A_{rs})$ contains $\EP_s(R)$.
Thus, by~\cref{lem:LN} we obtain that $\EP_s(R, I) \subseteq L(A_{rs})$.

Our next goal is to verify the inclusion $\U(\Sigma_s^-, I) \subseteq L(A_{rs})$.
Since $\widetilde{W}(\Delta_s) \subseteq \E(\Delta_s, R) \subseteq N(A_{rs})$ and 
 $X_{\alpha}(I) \subseteq L(A_{rs})$ for $\alpha \in \Sigma^-_s\cap \Delta_r$, 
we obtain from~\cref{lem:LN} and \cref{item-trans2} that $L(A_{rs})$ contains root subgroups $X_\alpha(I)$ for $\alpha$ lying in the orbit $O = W(\Delta_s) \cdot (\Sigma^-_s\cap \Delta_r)$.
Thus, we are left to consider three cases when $O$ does not coincide with $\Sigma^-_s$. 
\begin{enumerate} 
  \item \textit{Case $\Phi = \rB_\ell$, $s=1$, $r=\ell$.} 
 Only the sole short root of $\Sigma_1^-$ is not contained in $O$, denote it by $\gamma$.
 Set $\alpha = \alpha_2 + \ldots + \alpha_\ell,\ \beta = -\alpha_1 - 2\alpha_2 - \ldots -2\alpha_\ell.$
 It is clear that $\alpha \in \Phi^+$, $\beta, \beta + 2\alpha \in O$.
 Since $X_{\beta}(I), X_{\beta+2\alpha}(I) \subseteq L(A_{rs})$ and $X_{\alpha}(R) \subseteq N(A_{rs})$ we obtain the required inclusion $X_{\gamma}(I) \subseteq L(A_{rs})$ using Chevalley commutator formula:
 \begin{equation} \nonumber x_{\gamma}(\pm ab) = [x_{\beta}(a),\, x_{\alpha}(b)] \cdot x_{\beta + 2\alpha}(ab^2). \end{equation}
 \item \textit{Case $\Phi = \rF_4$, $s=4$, $r=1$.}
 For a root $\alpha\in \Sigma_4^-$ there are only $3$ possibilities for its length and the value of $m_4(-)$:
 \begin{itemize}
  \item $m_4(\alpha) = -1$, $\alpha$ is short (there are 8 such roots);
  \item $m_4(\alpha) = - 2$, $\alpha$ is long (there are 6 such roots);
  \item $m_4(\alpha) = - 2$, $\alpha$ is short (there is only one such root, denote it by $\gamma$).
 \end{itemize}
 Since $\Delta_r \cap \Sigma_s^-$ contains roots of the first two types, only $\gamma$ is not contained in $O$.
 Clearly, $\gamma = -\alpha_1 -2\alpha_2-3\alpha_3-2\alpha_4 = \alpha + \beta$ for $\alpha = \alpha_1 + \alpha_2 + \alpha_3\in \Phi^+,$ $\beta = -\alpha_{{\max}} \in O$.
 Since $\alpha \in \Phi^+$, $\beta, \beta + 2\alpha \in O$ we obtain the inclusion $X_{\gamma}(I) \subseteq L(A_{rs})$ by the same token as in the previous case.
 \item \textit{Case $\Phi = \rC_\ell$, $s=1$, $r=\ell$.} 
  In this case we need to invoke stability assumptions one more time.
  From~\cref{lemma:DVST} we obtain that $N(A_{rs})$ contains $X_{-\alpha_1}(R)$ and therefore coincides with $\E(\Phi, R)$.
  The required inclusion now follows~\cref{item-trans1}.
\end{enumerate}
Thus, we have shown that $\mathcal{X}\subseteq L(A_{rs})$. 
Recall from~\cref{prop:Stepanov-theorem} that $\E(\Phi, R, I)$ is generated by $\mathcal{Z}(\Sigma_s^-)$.
On the other hand, from $\mathcal{Z}(\Sigma_s^-) \subseteq \mathcal{X}^{\EP_s(R)} \subseteq L(A_{rs})^{N(A_{rs})} \subseteq L(A_{rs})$ we conclude that 
 $\E(\Phi, R, I) \subseteq L(A_{rs})$, which completes the proof.
\end{proof}

\subsection{Relative Bass--Kolster decompositions}\label{sec:bass-kolster}
The next theorem is a relative version of the so-called Bass--Kolster decomposition (cf.~\cite[Theorem~2.1]{St78}).
\begin{thm}\label{thm:BassKolster}
Let $\Phi$ be a classical root system of rank $\ell\geqslant2$, let $R$ be an arbitrary commutative ring and $I$ be its ideal, satisfying one of the following assumptions:
\[\begin{array}{l@{\quad}l@{\quad}l@{\quad}c}
\Phi = \rA_\ell, \ \ell\geqslant 2, & \sr(I) \leqslant \ell; \\
\Phi = \rC_\ell, \ \ell\geqslant 2, & \sr(I) \leqslant 2\ell-1; \\
\Phi = \rB_\ell, \rD_\ell, \ \ell\geqslant 3, & \asr(I) \leqslant \ell-1.
\end{array}\]
Then the principal congruence subgroup $\G(\Phi, R, I)$ admits decomposition:
\[ \G(\Phi, R, I)=  \U(\Phi^+, I) \cdot \U(\Phi^-, I) \cdot Z \cdot \U(\Sigma_1^-\setminus\{-\alpha_{\max}\}, I) \cdot \U(\Sigma_1, I) \cdot \G(\Delta_1, R, I), \]
where $Z = Z_{\alpha_{\max}}(I)=\left\{z_{-\alpha_{\max}}(r, 1)\ \middle|\ r\in I \right\}$.
\end{thm}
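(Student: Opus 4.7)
The plan is to reduce $g\in\G(\Phi,R,I)$, by left-multiplication from $\U(\Phi^+,I)$ and $\U(\Phi^-,I)$, to an element whose action on the highest weight vector $v^+$ of the natural representation of $\G(\Phi,R)$ agrees with that of a suitable product in $Z\cdot\U(\Sigma_1^-\setminus\{-\alpha_\mathrm{max}\},I)$; the residual factor then fixes $v^+$ and, by \cref{lemma:Chevalley-Matsumoto}, lies in $\U(\Sigma_1^+,I)\cdot\G(\Delta_1,R,I)$, supplying the remaining two factors of the claimed decomposition.

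The first step is a column straightening: I would produce $x_1\in\U(\Phi^+,I)$ and $y_1\in\U(\Phi^-,I)$ so that $y_1x_1g\cdot v^+$ has zero coordinate at the weight $\varpi_1-\alpha_\mathrm{max}$ while the remaining nonzero coordinates form an $I$-unimodular tuple. For $\rA_\ell$ with $\sr(I)\leq\ell$ this is \cref{lemma:uraction}(i) with $k=\ell$. For $\rC_\ell$ with $\sr(I)\leq 2\ell-1$ the stability assumption is applied directly to the length-$2\ell$ unimodular column: a single $\U(\Phi^+,I)$-shortening produces an $I$-unimodular tuple of length $2\ell-1$, after which a $\U(\Phi^-,I)$-move kills the weight $-1$ entry using that unimodularity. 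For $\rB_\ell,\rD_\ell$ with $\asr(I)\leq\ell-1$, \cref{lemma:uraction}(iii) (together with \cref{cor:embeddingBD} for $\rB_\ell$) kills the entire ``lower half'' of the column, in particular the weight $\varpi_1-\alpha_\mathrm{max}$.

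The second step is the matching. Writing $h=y_1x_1g$, $h\cdot v^+=\sum_\lambda\alpha_\lambda e_\lambda$ and $\sum c_\lambda\alpha_\lambda=1$ (from the unimodularity), I construct $u\in\U(\Sigma_1^-\setminus\{-\alpha_\mathrm{max}\},I)$ whose image realises the prescribed $\alpha_\lambda$'s at every weight distinct from $\varpi_1$ and $\varpi_1-\alpha_\mathrm{max}$; $z=z_{-\alpha_\mathrm{max}}(1-\alpha_{\varpi_1},1)\in Z$, which brings the $\varpi_1$-coordinate from $1$ to $\alpha_{\varpi_1}$ while inserting a compensating $1-\alpha_{\varpi_1}$ at the $\varpi_1-\alpha_\mathrm{max}$-coordinate; and $y_2\in\U(\Phi^-,I)$ built from negative roots attaching various weights to $\varpi_1-\alpha_\mathrm{max}$, with coefficients drawn from the $c_\lambda$'s, which kills that compensating term. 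Direct bookkeeping with the Steinberg relations then verifies $y_2zu\cdot v^+=h\cdot v^+$. Setting $p=(y_2zu)^{-1}h$, the element $p$ fixes $v^+$; applying \cref{lemma:Chevalley-Matsumoto} to $p$ yields a decomposition $p=u_-h_0u_+$ with $u_\pm\in\U(\Sigma_1^\pm,I)$, $h_0\in\G(\Delta_1,R,I)$, and since $h_0\cdot v^+=v^+=u_+\cdot v^+$ the identity $p\cdot v^+=v^+$ forces $u_-\cdot v^+=v^+$, whence $u_-=1$ by linear independence of the weight vectors. Rearranging $g=x_1^{-1}\cdot(y_1^{-1}y_2)\cdot z\cdot u\cdot p$ delivers the required factorization.

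The main obstacle is the matching step outside the $\rA$ case. The negative-root moves used in $y_2$ to kill the spurious coordinate at $\varpi_1-\alpha_\mathrm{max}$ unavoidably perturb other coordinates of the column (in $\rC_\ell$ the short roots $-\epsilon_1-\epsilon_k$ move the coordinates at $-\epsilon_k$; in $\rB_\ell,\rD_\ell$ the situation is analogous). These side effects must be anticipated and absorbed by adjusting the parameters of $u$, and the cancellation succeeds precisely because the tuple produced by the straightening step is $I$-unimodular. The stability thresholds in the statement are exactly those needed for this simultaneous cancellation to be feasible.
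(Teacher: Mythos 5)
Your skeleton (straighten the column with the two unipotent radicals, spend one element of $Z$, finish with \cref{lemma:Chevalley-Matsumoto}) is the right one, but the pivotal ``matching'' step is a genuine gap, and it is exactly the point where the paper's proof takes a different turn. The paper never matches the whole column: it uses $\U(\Phi^-,I)\cdot\U(\Phi^+,I)$ to produce the \emph{pair} $\bigl((g'v^+)_{\varpi_1},(g'v^+)_{\varpi_1-\alpha_{\mathrm{max}}}\bigr)=(1+s,s)$, so that the single element $z_{-\alpha_{\mathrm{max}}}(-s,1)\in Z$ makes the $\varpi_1$-coordinate \emph{exactly} $1$ and the other coordinate $0$; then \cref{lemma:Chevalley-Matsumoto} absorbs all remaining coordinates, whatever they are. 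Your normal form $(\alpha_{\varpi_1},0)$ with $\alpha_{\varpi_1}\in 1+I$ not invertible destroys precisely this leverage (one cannot reach first coordinate $1$ by an element of $Z$ without dividing by $\alpha_{\varpi_1}$), which is why you are forced into the exact identity $y_2zu\cdot v^+=h\cdot v^+$. That identity is only straightforward in type $\rA_\ell$. For $\rB_\ell,\rD_\ell$ one has $\alpha_{\mathrm{max}}=e_1+e_2$, so $z_{-\alpha_{\mathrm{max}}}(1-\alpha_{\varpi_1},1)$ does not just touch the weights $\varpi_1$ and $\varpi_1-\alpha_{\mathrm{max}}$: it also acts on the second pair of weights $(2,-1)$ differing by $\alpha_{\mathrm{max}}$, multiplying the coordinate at weight $2$ by the non-unit $\alpha_{\varpi_1}$ and dumping a term into the lowest weight. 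Afterwards the only way an element of $\U(\Phi^-,I)$ can alter the weight-$2$ coordinate is by adding $I$-multiples of the weight-$1$ coordinate, i.e.\ of the same non-unit $\alpha_{\varpi_1}$; so the recipe ``let $u$ install the prescribed values, let $y_2$ kill the compensating term'' cannot produce an arbitrary target $\alpha_2\in I$ there. (A matching might still be salvageable by feeding the cross terms $a_jb_j$ of $u$ into the off-diagonal entry of $z$ and using that $\alpha_{\varpi_1}$ is comaximal with $I$, but nothing of the sort is argued, and the stated ``direct bookkeeping'' is false as described; your remark that the side effects come from $y_2$ and are absorbed ``by adjusting $u$'' misses that the problematic interference comes from $z$ itself.)

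There are also gaps in the straightening step outside type $\rA$. For $\rC_\ell$ a ``single $\U(\Phi^+,I)$-shortening'' of the length-$2\ell$ column is not available off the shelf: the symplectic root elements $x_{e_1+e_j}$, $x_{e_1-e_j}$ that add multiples of $v_{-1}$ to the other slots simultaneously perturb the weight-$1$ slot, so one cannot realize independent prescribed additions inside $\U(\Phi^+,I)$; this is exactly why the paper applies $\SR_{2\ell-1}(I)$ to the modified column $(v_1,\ldots,v_{-2},v_{-1}^2)$ so that the correction lands in $I\cdot v_{-1}$ and can be compensated by the extra parameter $d$ (cf.\ \cref{lemma:PSV-symplectic-trick}). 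For $\rB_\ell$, \cref{cor:embeddingBD} concerns the embedding $\rB_{\ell-1}\hookrightarrow\rD_\ell$ acting on the $2\ell$-dimensional module, not the natural $(2\ell+1)$-dimensional module of $\rB_\ell$; the paper instead first works modulo the ideal $J$ generated by the lower half and then runs the $\rD$-type argument on the long roots. These points are repairable, but together with the matching problem they are exactly the case-by-case content that your proposal leaves unproved.
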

\begin{proof}

Let $g$ be an element of $\G(\Phi, R, I)$. Set $v=g \cdot v^+\in\Ums(n, I)$. 
Notice that in each case it suffices to find $g' \in \U(\Phi^-, I) \cdot \U(\Phi^+, I) \cdot g$ such that 
\begin{equation} \label{eq1} (g'\cdot v^+)_{1} = 1 + s \text{ and } (g'\cdot v^+)_{\varpi_{1}-\alpha_{max}} = s\ \text{for some}\ s\in I. \end{equation}
Indeed, set $g'' = z_{-\alpha_{max}}(-s, 1) \cdot g'$.
Obviously, one has $(g''\cdot v^+)_1 = 1$, $(g''\cdot v^+)_{\varpi_{1}-\alpha_{max}}=0$ and the conclusion of the theorem follows from \cref{lemma:Chevalley-Matsumoto}.

\textsc{Case $\Phi=\rA_\ell$, $n=\ell + 1$.}
Since $\sr(I)\leqslant\ell$, one can find $a_1, \ldots, a_\ell\in I$ such that $(v_1+a_1v_{\ell+1}, \ldots, v_\ell+a_\ell v_{\ell+1})^t=(v'_1, \ldots, v'_\ell)^t$ is $I$-unimodular.
Then there are $b_1, \ldots, b_\ell\in I$ such that $b_1v'_1+\ldots b_\ell v'_\ell=v'-1\in I$. 
Thus the vector \[ v'' = \prod_{i=1}^\ell x_{\ell+1, i}(b_i) \cdot \prod_{i=1}^\ell x_{i, \ell+1}(a_i) \cdot v \]
satisfies the equalities~\eqref{eq1}.

\textsc{Case $\Phi=\rC_\ell$, $n=2\ell$.}
Notice that the column $(v_1, \ldots, v_{-2}, v_{-1}^2)^t$ is also $I$-unimodular.
Applying the assumption $\sr(I)\leqslant 2\ell-1$, we find $c_1, c_2, \ldots, c_{-2} \in I \cdot v_{-1}$ such that upper $2\ell -1$ components of $v'=(v_1 + c_1 v_{-1}, \ldots, v_{-2} + c_{-2}v_{-1}, v_{-1})^t$ form an $I$-unimodular column.
By the choice of $c_i$ we can find suitable $d\in I$ such that $h_1 \cdot v = v'$ for
\[ h_1 = x_{1, -1}(c_1 + d) \cdot \prod_{i=2}^{-2} x_{i, -1}(c_i) \in \U(\Sigma_1^+, I). \]
We can find $f_1, f_2, \ldots, f_{-2} \in R$ such that $f_1v'_1+\sum_{i=2}^{-2} f_i v'_i = 1$.
Set $\xi = v''_1-v''_{-1}-1 \in I$, 
\[ h_2 = x_{-1, 1}\biggl(\xi f_1 + \sum_{i=2}^\ell v_1' \xi^2 f_i f_{-i}\biggr) \cdot \prod_{i=2}^{-2} x_{-1, i}(\xi f_i) \in \U(\Sigma_1^-, I). \]
Direct computation shows that the vector $v'' = h_2 \cdot v'$ satisfies equalities~\eqref{eq1}.

\textsc{Case $\Phi=\rD_\ell$, $n= 2\ell$.} 
By \cref{item:asrUnipD} we can find $h_1\in \U(\Sigma^+_\ell, I)$ such that the upper half $v'_+$ of $v'=h_1 \cdot v$ is $I$-unimodular.
Since $\sr(I)\leqslant \ell-1$, we can find $c_1$, $c_3, \ldots c_\ell \in I$ such that $(v''_1, v''_3, \ldots, v''_\ell) \in \Ums(\ell-1, I)$, where
\[ v''=h_2 \cdot x_{1, 2}(c_1) \cdot v', \quad h_2=\prod_{i=3}^\ell x_{i, 2}(c_i). \]
We can find $f_1, f_3, \ldots, f_\ell \in R$ such that $f_1v''_1+\sum_{i=3}^\ell f_i v''_{i} = 1$.
As before, set
\[ \xi = v''_1-v''_{-2}-1 \in I, \quad h_3 = x_{-2, 1}(\xi f_1) \cdot \prod_{i=3}^\ell x_{-2, i}(\xi f_i), \quad v'''=h_3 \cdot v''. \]
Clearly, $t_{1, 2}(c_1) \cdot h_1 \in \U(\Phi^+, I)$, $ h_3 \cdot h_2 \in \U(\Phi^-, I)$ and $v'''$ satisfies~\eqref{eq1}.

\textsc{Case $\Phi=\rB_\ell$, $n=2\ell+1$.} Subdivide $v\in \Ums(2\ell+1, I)$ as $v=(v_+, v_0, v_-)\in R^\ell\times R\times R^\ell$.
Denote by $J\leqslant I$ the ideal spanned by the components of $v_-$.
Since $\sr(I/J)\leqslant \ell$, we can find $c_1, \dots, c_\ell\in I$ such that for $v' = h \cdot v$, $h = \prod_{i=1}^\ell x_{i, 0}(c_i) \in \U(\Phi^+, I)$
one has $\bar{v'}_+=(\bar{v'_1}, \ldots, \bar{v'_\ell}) \in \Ums(\ell, I/J)$ and, therefore, $(v'_+, v'_-) \in \Ums(2\ell, I)$.
Now the proof can be finished by repeating the argument for the case $\Phi=\rD_\ell$ (applied to the subset of long roots of $\rB_\ell$).
\end{proof}

It is easy to see that the proof of the above theorem is effective and gives an estimate of the total number of elementary root unipotents involved in the decomposition.
\begin{cor}\label{cor:bass-kolster-count}
In the assumptions and notation of \cref{thm:BassKolster} every element of $\G(\Phi, R, I)$ 
can be factored into a product of one element of $\G(\Delta_1, R, I)$, one element of $Z$ and at most $4|\Sigma_1|-1$ elements of $\mathcal{X}$.
\end{cor}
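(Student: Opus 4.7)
The plan is to follow the construction in the proof of \cref{thm:BassKolster} and tally the elementary root unipotents in each of the factors produced there. That proof writes any $g\in \G(\Phi, R, I)$ as
\[ g = u_+ \cdot u_- \cdot z \cdot u^- \cdot u^+ \cdot h, \]
with $u_+\in\U(\Phi^+, I)$, $u_-\in\U(\Phi^-, I)$, $z\in Z$, $u^-\in\U(\Sigma_1^-\setminus\{-\alpha_\mathrm{max}\}, I)$, $u^+\in\U(\Sigma_1, I)$, and $h\in\G(\Delta_1, R, I)$. The factors $z$ and $h$ serve as the two named non-elementary factors in the statement; it remains to bound the total number of elementary unipotents in $u_+, u_-, u^-, u^+$ by $4|\Sigma_1|-1$.

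The Chevalley---Matsumoto factors $u^\pm$, produced via \cref{lemma:Chevalley-Matsumoto}, are in normal form and together contribute exactly $(|\Sigma_1|-1)+|\Sigma_1|=2|\Sigma_1|-1$ unipotents. For the preparatory factors $u_+$ and $u_-$ I would proceed case by case, reading off the counts directly from the theorem's proof. For $\Phi=\rA_\ell$ these are $\prod_{i=1}^\ell x_{i,\ell+1}(a_i)$ and $\prod_{i=1}^\ell x_{\ell+1,i}(b_i)$, giving $2\ell=2|\Sigma_1|$. For $\Phi=\rC_\ell$ the explicit $h_1$ and $h_2$ displayed in the proof yield a total of $2(2\ell-1)=2|\Sigma_1|$. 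For $\Phi=\rB_\ell,\rD_\ell$ one inspects $x_{1,2}(c_1)\cdot h_1\in\U(\Phi^+, I)$ and $h_3\cdot h_2\in\U(\Phi^-, I)$ (together with the initial $\prod_{i=1}^\ell x_{i,0}(c_i)$ in the $\rB_\ell$ case), and checks that the same $2|\Sigma_1|$-bound holds.

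The hard step is the $\rD_\ell$ (hence $\rB_\ell$) case: the element $h_1\in\U(\Sigma_\ell^+, I)$ supplied by \cref{lemma:asrUnip} is a priori a product of up to $|\Sigma_\ell^+|$ elementary unipotents, which for $\ell\geqslant 5$ strictly exceeds $|\Sigma_1|$, so a naive count overshoots the target. To reduce it, I would apply the Levi decomposition \eqref{rel:Levi-decomp} of the parabolic $\EP_1(R, I)$ to split $h_1$ as a product of an element of $\U(\Sigma_1^+, I)$ and an element of $\U(\Delta_1^+, I)$; the latter is then moved through the remaining factors using the Steinberg commutator relations \eqref{rel:add}--\eqref{rel:CCF} together with the $\E(\Delta_1, R, I)$-normalization of $\U(\Sigma_1^\pm, I)$, and finally absorbed into the trailing factor $h\in\G(\Delta_1,R,I)$. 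Once this bookkeeping is carried out, summing $(|\Sigma_1|-1)+|\Sigma_1|+2|\Sigma_1|$ delivers the advertised bound $4|\Sigma_1|-1$.
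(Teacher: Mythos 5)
Your overall scheme (tally the unipotents produced in the proof of \cref{thm:BassKolster}, and rescue the $\rD_\ell$/$\rB_\ell$ case by splitting $h_1\in\U(\Sigma_\ell^+, I)$ and absorbing a Levi-type piece into the trailing $\G(\Delta_1, R, I)$) is exactly the paper's strategy, and your counts for $\rA_\ell$, $\rC_\ell$ and for the Chevalley--Matsumoto factors are fine. The gap is in the key reduction step: the split you propose, along the parabolic $\EP_1$, into $\U(\Sigma_1^+\cap\Sigma_\ell^+, I)\cdot\U(\Delta_1\cap\Sigma_\ell^+, I)$, is too coarse for the ``move it to the right'' argument to work. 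In the standard realization of $\rD_\ell$ the part you want to slide through contains the root subgroups $X_{e_2+e_j}(I)$, $j\geqslant 3$, and $(e_2+e_j)+\alpha_1=e_1+e_j\in\Phi$, so these do \emph{not} commute with the $X_{\alpha_1}(I)$ factor sitting immediately to their right; likewise $(e_2+e_j)-\alpha_{\mathrm{max}}=e_j-e_1\in\Phi$, so they centralize neither $X_{-\alpha_{\mathrm{max}}}(I)$ nor the set $Z$ (and $Z$ is a specific conjugate of $X_{-\alpha_{\mathrm{max}}}(I)$, not something one may perturb freely). Pushing them through therefore spawns extra elementary factors, e.g.\ in $X_{e_1+e_j}(I)$ and $X_{\alpha_1}(I)$, which are not covered by your tally and which cannot be pushed past $\U(\Sigma_1^-, I)$ at all, since the opposite roots $-e_1-e_j$ and $-\alpha_1$ lie in $\Sigma_1^-$ and the Steinberg relations \eqref{rel:add}--\eqref{rel:CCF} give no way to commute a root element past its opposite. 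So the bookkeeping in your last step does not close up as stated.

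The paper avoids this by cutting one node deeper: write $h_1=g_1g_2$ with $g_1\in\U(\Sigma_{1, 2}^+\cap\Sigma_\ell^+, I)$ and $g_2\in\U(\Delta_{1, 2}\cap\Sigma_\ell^+, I)$. The factor $g_1$ consists of at most $2\ell-3$ unipotents, which still fits your budget of $2|\Sigma_1|$ for the preparatory part, while $g_2$, having zero coefficients at both $\alpha_1$ and $\alpha_2$, genuinely centralizes or normalizes each of the factors $X_{\alpha_1}(I)$, $\U(\Sigma_2^-\cap\Delta_1, I)$, $X_{-\alpha_{\mathrm{max}}}(I)$, $Z$, $\U(\Sigma_1^\mp, I)$ (this is the ``examination of the extended Dynkin diagram'' step) and is then consumed by $\G(\Delta_1, R, I)$ without creating any new elementary factors; summing $(2\ell-3)+1+(2\ell-4)+1+2(2\ell-2)=8\ell-9=4|\Sigma_1|-1$ gives the bound. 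If you replace your $\EP_1$-Levi split by this $\Delta_{1, 2}$-split (and check the analogous statement for the long roots in the $\rB_\ell$ case), your argument becomes the paper's proof.
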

\begin{proof}
The assertion can be obtained by a careful analysis of the proof of the previous theorem.
Cases $\Phi=\rA_\ell, \rC_\ell$ are immediate.
In the case $\Phi=\rD_\ell$ from the proof of \cref{thm:BassKolster} one obtains that
\begin{equation*} \G(\Phi, R, I) =  \U(\Sigma_\ell^+, I) \cdot X_{\alpha_1}(I) \cdot \U(\Sigma_2^-\cap\Delta_1, I) \cdot X_{-\alpha_{\max}}(I) \cdot Z \cdot \U(\Sigma_1^-, I) \cdot \U(\Sigma_1^+, I) \cdot \G(\Delta_1, R, I). \end{equation*}
We can present an element $g$ of $\U(\Sigma_\ell^+, I)$ as a product of $g_1 \in \U(\Sigma_{1, 2}^+ \cap \Sigma_\ell^+)$ and $g_2\in \U(\Delta_{1, 2}\cap \Sigma_\ell^+)$.
An examination of the extended Dynkin diagram of $\rD_\ell$ shows that $g_2$ either centralizes or normalizes all factors of the above decomposition (except the last one)
and therefore can be moved to the right until it is consumed by $\G(\Delta_1, R, I)$.
On the other hand, $g_1$ is a product of at most $2\ell-3$ elementary unipotents, while the width of $\U(\Sigma_1^\pm, I)$ and $\U(\Sigma_2^-\cap\Delta_1)$ with respect to the elementary unipotents does not exceed $2\ell-2$ and $2\ell-4$, respectively.
Summing up these upper bounds, we obtain
$$(2\ell-3) + 1 + (2\ell - 4) + 1 + 2\cdot (2\ell - 2) = 8\ell - 9 = 4|\Sigma_1| - 1.$$

The estimate in the case $\Phi=\rB_\ell$ can be obtained in a similar way. \end{proof}
\begin{cor}\label{cor:bass-kolster-iterated}
Assume that $\Phi$ and $I$ satisfy one of the following assumptions
\[\begin{array}{l@{\quad}l@{\quad}l}
\Phi=\rA_\ell, & \sr(I)\leqslant 2, & N'=3\left|\Phi^+\right|+2\ell - 5; \\
\Phi=\rC_\ell, & \sr(I)\leqslant 3, & N'=3\left|\Phi^+\right|+3\ell - 6; \\
\Phi=\rB_\ell, \rD_\ell, & \asr(I)\leqslant 2, & N'=4\left|\Phi^+\right| - 4.
\end{array}\]

Then every element of $\G(\Phi, R, I)$ can be decomposed into a product of one element of $\G(\langle\pm\alpha_\ell\rangle, R, I) \cong \SL(2, R, I)$ and at most $N'$ elements of $\mathcal{Z}(\Sigma_\ell)$:
\end{cor}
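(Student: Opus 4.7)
The plan is to iterate \cref{cor:bass-kolster-count}, peeling off the node $\alpha_1$ at each stage. I set $\Phi^{(0)} = \Phi$ and $\Phi^{(k+1)} = \Delta_1(\Phi^{(k)})$, where $\Delta_1$ is formed with respect to the leftmost remaining node; for classical $\Phi$ this yields a descending chain of subsystems whose ranks drop by one at each step and which terminates in the rank-one subsystem $\Phi^{(\ell-1)} = \langle\pm\alpha_\ell\rangle$ supplying the final $\SL(2, R, I)$ factor of the statement. At stage $k$, \cref{cor:bass-kolster-count} writes an arbitrary element of $\G(\Phi^{(k)}, R, I)$ as a product of at most $4|\Sigma_1(\Phi^{(k)})|-1$ elements of $\mathcal{X}$, one element of $Z_{\alpha_{\max}(\Phi^{(k)})}(I)$, and one element of $\G(\Phi^{(k+1)}, R, I)$ to be processed in the next stage.

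Two compatibility issues must be checked before the iteration can be assembled. First, the hypotheses of \cref{cor:bass-kolster-count} should hold at every stage: the required stable-rank bound weakens as the rank drops, so the single assumption $\sr(I)\leqslant 2$, $\sr(I)\leqslant 3$, or $\asr(I)\leqslant 2$ from the statement propagates down the entire chain. For $\rB_\ell$ and $\rD_\ell$ the chain passes through the coincidences $\rB_2\cong\rC_2$ and $\rD_3\cong\rA_3$; in the last few stages one switches to the $\rA$- or $\rC$-version of Bass--Kolster, whose hypotheses are weaker than those already in force in view of~\eqref{sr-estimates}. Second, every factor produced must lie in $\mathcal{Z}(\Sigma_\ell)$. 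The inclusion $\mathcal{X}\subset\mathcal{Z}(\Sigma_\ell)$ is automatic, while for the $Z$-factor at stage $k$ one needs $-\alpha_{\max}(\Phi^{(k)})\in\Sigma_\ell^-$, i.e., $m_\ell(\alpha_{\max}(\Phi^{(k)}))>0$. Since the node $\alpha_\ell$ is never removed during the iteration and it appears in the support of the highest root of every classical type, this condition holds throughout.

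Finally, one sums the contributions $4|\Sigma_1(\Phi^{(k)})|$ across the $\ell-1$ stages. A direct calculation shows that for $\rB_\ell$ and $\rD_\ell$ the naive sum already equals $4|\Phi^+|-4$ exactly (the type-switching final stages on $\rC_2$, $\rA_3$, $\rA_2$ conspire with the $\rB$ or $\rD$ contributions to hit the target), so no further bookkeeping is needed. For $\rA_\ell$ and $\rC_\ell$, however, the naive sum exceeds the target by a quadratic term in $\ell$, and these savings have to be recovered by merging the trailing $\U(\Sigma_1^+(\Phi^{(k)}), I)$ factor of each stage with the leading unipotent pieces of the next, using the Levi decomposition~\eqref{rel:Levi-decomp} and the Steinberg commutator relations so that root subgroups shared between consecutive stages are not double-counted. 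The main obstacle in this plan is carrying out this merging precisely in the $\rA_\ell$ and $\rC_\ell$ cases to match the stated $N'$; the iteration itself and the $\rB_\ell, \rD_\ell$ count are routine.
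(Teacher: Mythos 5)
Your iteration scheme is the same as the paper's: apply \cref{thm:BassKolster} with the count of \cref{cor:bass-kolster-count} a total of $\ell-1$ times, removing the first node at each stage and switching to type $\rA$ or $\rC$ at the bottom of the chain; your verification of the hypotheses along the chain, of the inclusion of the $Z$-factors in $\mathcal{Z}(\Sigma_\ell)$, and of the count for $\rB_\ell,\rD_\ell$ is correct. The gap is exactly the point you leave open for $\rA_\ell$ and $\rC_\ell$, and the mechanism you propose for closing it cannot work. The trailing unipotent factors of the stage on $\Phi^{(k)}$ lie in root subgroups $X_\alpha(I)$ with $\alpha\in\Sigma_1(\Phi^{(k)})\cup-\Sigma_1(\Phi^{(k)})$, while every factor produced at the next stage lies in $\G(\Delta_1(\Phi^{(k)}),R,I)$; these root sets are disjoint, so there are no ``root subgroups shared between consecutive stages'' and no double-counting to remove. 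Merging adjacent unipotent pieces can save only a bounded number of factors per stage, i.e. $O(\ell)$ in total, whereas the naive sum $\sum_k 4\left|\Sigma_1(\Phi^{(k)})\right|\approx 4\left|\Phi^+\right|$ exceeds the stated $N'\approx 3\left|\Phi^+\right|$ by a quantity on the order of $\left|\Phi^+\right|$, which is quadratic in $\ell$.

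The actual savings come from using the stronger stability hypotheses inside the proof of \cref{thm:BassKolster} itself, not from interactions between stages. If $\sr(I)\leqslant 2$ (resp.\ $3$), the first step of that proof --- making the truncated column $I$-unimodular --- can be carried out by adding $I$-multiples of the last coordinate to only the first two (resp.\ three) coordinates: pass to $R/J$, where $J$ is the ideal generated by the untouched coordinates, use $\sr(I/J)\leqslant\sr(I)$ (\cref{prop:sr_properties}) to shorten the resulting $I/J$-unimodular row of length $3$ (resp.\ $4$), and lift the correcting elements back to $I$. Thus this step costs $2$ (resp.\ $3$) elementary factors instead of $\left|\Sigma_1(\Phi^{(k)})\right|$ many, so each stage costs roughly $3\left|\Sigma_1(\Phi^{(k)})\right|+O(1)$ rather than $4\left|\Sigma_1(\Phi^{(k)})\right|$, and summing over the $\ell-1$ stages gives the stated values $3\left|\Phi^+\right|+2\ell-5$ and $3\left|\Phi^+\right|+3\ell-6$. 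This is precisely the observation the paper makes; your argument for $\rB_\ell,\rD_\ell$, which needs no such improvement, stands as is.
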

\begin{proof}
The assertion can be obtained by iteratively applying (for a total of $\ell-1$ times) the decomposition of \cref{thm:BassKolster} (see also Fig.~1 below).
The improved estimate for $\Phi=\rA_\ell$ (resp. $\rC_\ell$) follows from the fact that it suffices to make only two (resp. three) additions to shorten the unimodular column in the first step of the proof of \cref{thm:BassKolster}.
\end{proof}

\begin{figure}[hb]\label{fig:bass-kolster}
\[\begin{tikzcd}[row sep=tiny]
\rD_4 \arrow[out=0, in=120]{rd}{\asr\leqslant3} & & & \\
\rA_4 \arrow{r}{\sr\leqslant4} & \rA_3 \arrow{r}{\sr\leqslant3} & \rA_2 \arrow{r}{\sr\leqslant2} & \rA_1 \\
\rC_4 \arrow{r}{\sr\leqslant7} & \rC_3 \arrow{r}{\sr\leqslant5} & \rC_2 \arrow[out=0, in=-120, swap]{ru}{\sr\leqslant3} \\
\rB_4 \arrow{r}{\asr\leqslant3} & \rB_3 \arrow[out=0, in=-120, swap]{ru}{\asr\leqslant2} & &
\end{tikzcd}\] \caption{Reductions used in the proof of \cref{cor:bass-kolster-iterated} and \cref{thm:SL2width}} \end{figure}

\section{Applications} \label{sec:applications}
\subsection{Subsystem factorizations}\label{sec:subsysfact}
Recall from~\cite{LNS11, V13} that a Chevalley group over a field $\G(\Phi, F)$ can be presented as a product of at most $3|\Phi^+|$ of its subgroups $\SL(2, F)$.
As an easy corollary of Bass--Kolster decomposition we get that a similar factorization also holds for relative groups over rings of small stable rank.
\begin{thm}\label{thm:SL2width}
Let $I\trianglelefteq R$ be an ideal, $\Phi$ be an irreducible classical root system of rank $\ell$ satisfying one the following conditions:
\[\begin{array}{l@{\quad}l@{\quad}l}
\Phi=\rA_\ell, & \sr(I)\leqslant 2, & N=3\left|\Phi^+\right| - \ell - 1; \\
\Phi=\rC_\ell, & \sr(I)\leqslant 3, & N=3\left|\Phi^+\right| - 2; \\
\Phi=\rB_\ell, \rD_\ell, & \asr(I)\leqslant 2, & N=4\left|\Phi^+\right| - 3\ell.
\end{array}\]
Then the principal congruence subgroup $\G(\Phi, R, I)$ can be presented as a product of at most $N$ of its subgroups, 
 where each subgroup is an isomorphic copy of $\SL(2, R, I)$.
\end{thm}

\begin{proof}
 As in the proof of \cref{cor:bass-kolster-iterated}, the assertion is obtained via iterative application of~\cref{thm:BassKolster}.
 To reduce the number of $\SL_2$-factors involved in the decompositoin one has to group into a single $\SL_2$-factor a pair of opposite root subgroups $X_{\alpha}(I)$, $X_{-\alpha}(I)$ (or $Z_{\pm\alpha}(I)$) appearing on each
 of the $3$ junctions between the positive and negative unipotent subgroups in the Bass--Kolster decomposition.
 Since a total of $\ell-1$ reductions are used, we get the required estimate $N \leq N' - 3(\ell - 1) + 1$.
\end{proof}

We now turn our attention to the proof of \cref{thm:spin-sln-prod}, which will occupy the rest of this subsection.

Consider the decreasing chain $\Phi_k$, $k=1, \ldots, \lfloor \ell/2 \rfloor$ of root subsystems of $\Phi=\rD_\ell$ defined as follows.
If $2k \neq \ell$, let $\Phi_k$ be the subsystem of $\Phi$, spanned by the fundamental roots $\alpha_{2k-1}, \ldots, \alpha_\ell$.
Clearly, such $\Phi_k$ has type $\rD_{\ell-2k+2}$. 
In the remaining case $2k = \ell$ set $\Phi_k = \langle \alpha_\ell \rangle \cong \rA_1$.
Now let $\beta_k$ be the maximal root of $\Phi_k$, i.\,e. $\beta_k = \alpha_{\max}(\Phi_k)$, $k=1, \ldots, \lfloor \ell/2 \rfloor$.
Denote by $B$ the set of all $\beta_k$. From the definition it is clear that the elements of $B$ are mutually orthogonal to each other.
The roots $\beta_k$ can also be defined by explicit formulae:
\begin{align*}
 \beta_k =  \alpha_{2k-1} + 2\alpha_{2k}+ \ldots + 2\alpha_{\ell-2} + \alpha_{\ell-1} + \alpha_\ell, & \text{ for } k=1, \ldots, \lfloor\ell/2\rfloor-1, \\
 \beta_{\lfloor\ell/2\rfloor} = \alpha_{\ell-2}+\alpha_{\ell-1}+\alpha_\ell, & \text{ if $\ell$ is odd, } \\
 \beta_{\lfloor\ell/2\rfloor} = \alpha_\ell, & \text{ if $\ell$ is even.}
\end{align*}

\begin{lemma}\label{lemma:nikolov-weyl} There exists an element $w\in W(\rD_\ell)$ such that $w(B) \subseteq \Delta_\ell^+$. \end{lemma}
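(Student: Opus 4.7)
The plan is to prove this by constructing $w$ explicitly as a signed permutation in the standard Euclidean realization of $\rD_\ell$.

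First, I would pass to the model $\rD_\ell\subset\mathbb{R}^\ell$ with the usual Bourbaki simple roots $\alpha_i=e_i-e_{i+1}$ for $i<\ell$ and $\alpha_\ell=e_{\ell-1}+e_\ell$. Under this identification $\Delta_\ell=\{\pm(e_i-e_j):i\neq j\}$ is the standard $\rA_{\ell-1}$-subsystem, so $\Delta_\ell^+=\{e_i-e_j:1\leq i<j\leq\ell\}$, and $W(\rD_\ell)$ acts on $\mathbb{R}^\ell$ as the group of signed permutations of $\{e_1,\ldots,e_\ell\}$ with an even number of sign changes.

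Second, I would check the uniform formula $\beta_k=e_{2k-1}+e_{2k}$. This follows by applying the fact that the maximal root of standard $\rD_r$ is $e_1+e_2$ to the subsystem $\Phi_k$ (which for $k<\lfloor\ell/2\rfloor$ is a copy of $\rD_{\ell-2k+2}$ sitting on the basis vectors $e_{2k-1},\ldots,e_\ell$), while the degenerate case $2k=\ell$ gives $\beta_k=\alpha_\ell=e_{\ell-1}+e_\ell$ directly. Hence $B=\{e_{2k-1}+e_{2k}\mid 1\leq k\leq\lfloor\ell/2\rfloor\}$, making the pairwise orthogonality transparent.

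Third, writing $m=\lfloor\ell/2\rfloor$, I would take the signed permutation
\[ w(e_{2k-1})=e_k,\qquad w(e_{2k})=-e_{m+k},\qquad 1\leq k\leq m, \]
completed, when $\ell$ is odd, by $w(e_\ell)=\epsilon\cdot e_\ell$ with a suitable $\epsilon\in\{\pm 1\}$. A direct substitution yields $w(\beta_k)=e_k-e_{m+k}$, which lies in $\Delta_\ell^+$ since $k<m+k\leq\ell$.

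The main obstacle is to verify that $w\in W(\rD_\ell)$, i.e., that the total number of sign changes is even. The construction produces exactly $m$ sign flips on the indices $2,4,\ldots,2m$, together with an optional flip on $e_\ell$. When $\ell$ is odd the parameter $\epsilon$ absorbs the parity, and when $\ell\equiv 0\pmod 4$ the count $m$ is already even; the case $\ell\equiv 2\pmod 4$ is the single delicate point of the argument, as any admissible $w$ must contribute exactly one sign flip per element of $B$, and one must exhibit a different target pairing or compose with a suitable element of $\widetilde W(\Delta_\ell)$ to produce a representative inside the correct subgroup of signed permutations.
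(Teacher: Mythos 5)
Your Euclidean setup, the identification $B=\{e_{2k-1}+e_{2k}\mid 1\leq k\leq\lfloor\ell/2\rfloor\}$, and the explicit signed permutation are all correct, and they settle the cases $\ell$ odd and $\ell\equiv 0\pmod 4$ by a more explicit route than the paper (which does $\ell=4$ by two reflections and reduces $\ell\geq 5$ to it via conjugacy of subsystems of type $\rA_1+\ldots+\rA_1+\rD_3$ or $\rA_1+\ldots+\rA_1+\rD_4$). However, the case $\ell\equiv 2\pmod 4$ that you leave open is a genuine gap, and neither of your proposed remedies can close it. As you yourself note, every root of $\Delta_\ell$ has the form $e_i-e_j$, so any signed permutation $w$ with $w(B)\subseteq\Delta_\ell$ must negate exactly one coordinate from each pair $\{2k-1,2k\}$; when $\ell=2m$ these pairs exhaust all coordinates, so $w$ has exactly $m$ sign changes no matter which roots of $\Delta_\ell$ (positive or negative) are chosen as targets, and composing with elements of $W(\Delta_\ell)$, or with lifts from $\widetilde{W}(\Delta_\ell)$, only inserts pure coordinate permutations and cannot change this parity. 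Hence for $m$ odd, i.e. $\ell\equiv 2\pmod 4$ --- already for $\ell=6$, where $B=\{e_1+e_2,\,e_3+e_4,\,e_5+e_6\}$ --- there is no $w\in W(\rD_\ell)$ with $w(B)\subseteq\Delta_\ell^+$ at all: your ``delicate point'' is an outright obstruction, and the statement as formulated fails in this congruence class, so no choice of target pairing or correction inside $\widetilde{W}(\Delta_\ell)$ can complete your argument.

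The same parity count shows that the paper's own reduction is not sound for even $\ell\geq 6$: conjugating the subsystem generated by the $\beta_k$ and a terminal $\rD_4$ controls the subsystem, not the signs of the individual roots, and the forced number of sign changes is unaffected. A correct version of the lemma must either restrict to $\ell\not\equiv 2\pmod 4$ or allow $w$ to be an arbitrary signed permutation, i.e. admit a composition with the diagram involution $\sigma$ (one extra sign change restores parity); the latter is harmless for the intended application, since $\sigma$ carries $\U(\Delta_\ell^+,I)$ to $\U(\Delta_{\ell-1}^+,I)$ and $\E(\Delta_{\ell-1},R,I)$ is again a regularly embedded subgroup of type $\rA_{\ell-1}$, which is all that \cref{lemma:nikolov-type-dl} and \cref{thm:spin-sln-prod} need. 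So the honest conclusion of your construction is: it proves the lemma for $\ell\not\equiv 2\pmod 4$ and disproves it otherwise.
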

\begin{proof}
\textsc{Case $\ell=4$.} Set $w = \sigma_{\alpha_{1} + \alpha_{2}} \circ \sigma_{\alpha_{2} + \alpha_{4}}$.
Straightforward computation shows that 
\begin{align*}
& w(\beta_1) = w(\alpha_{\max}) = \sigma_{\alpha_{1} + \alpha_{2}}(\alpha_1 + \alpha_2 + \alpha_3) = \alpha_3, \\
& w(\beta_2) = w(\alpha_4) = \sigma_{\alpha_{1} + \alpha_{2}}(- \alpha_2) = \alpha_1,
\end{align*}
which implies the assertion of the lemma.

\textsc{Case $\ell \geq 5$.}
Recall from \cite[Table~9]{Dy72} that for odd (resp. even) $\ell$ all maximal subsystems of type $\rA_1+\ldots+\rA_1+\rD_3$
(resp. $\rA_1+\ldots+\rA_1+\rD_4$) are conjugate under the action of $W(\Phi)$. Consequently, we can find $w\in W(\Phi)$
such that $w(\beta_k) = \alpha_{2k-1}$ for $k < \lfloor\ell/2\rfloor$ (resp. $k < \lfloor\ell/2\rfloor-1$). 
Now using transitivity of the action of $W(\rD_3)$ on the roots (resp. by the same argument as in the case $\ell=4$) we can move the remaining root $\beta_{\lfloor\ell/2\rfloor}$ 
(resp. the remaining $2$ roots $\beta_{\lfloor\ell/2\rfloor-1}$, $\beta_{\lfloor\ell/2\rfloor}$) to
$\alpha_{\ell-1}$ (resp. to $\alpha_{\ell-3}$, $\alpha_{\ell-1}$) while fixing all the other $\beta_k$. \end{proof}

The following lemma is an analogue of Proposition~1 of~\cite{Nik07}.
\begin{lemma}\label{lemma:nikolov-type-dl}
Let $\Phi=\rD_\ell$, $\ell\geq 2$ and let $I$ be an ideal of a  commutative ring $R$.
There exist an element $y\in\E(\Phi, R)$ and an element $w\in\widetilde{W}(\Phi)$ such that
\[ \U(\Sigma_\ell^+, I)\subset[\U(\Delta_\ell^-, I), y]\cdot{}^w\!\U(\Delta_\ell^+, I). \]
\end{lemma}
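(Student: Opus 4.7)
The plan is to set $y = \prod_{k=1}^{\lfloor \ell/2 \rfloor} x_{\beta_k}(1)$, a well-defined element of $\E(\Phi, R)$ because the $\beta_k$ are pairwise orthogonal and no two of them sum to a root in $\rD_\ell$, so the factors commute. Choose $w \in \widetilde{W}(\Phi)$ to be any lift of the inverse of the Weyl element produced by \cref{lemma:nikolov-weyl}; then $w^{-1}(\beta) \in \Delta_\ell^+$ for every $\beta \in B$, whence $\U(B, I) \subseteq {}^w\!\U(\Delta_\ell^+, I)$. With these choices the problem reduces to showing that the map $z \mapsto [z, y]$ from $\U(\Delta_\ell^-, I)$ to $\U(\Sigma_\ell^+, I)$ becomes surjective after composing with the quotient by $\U(B, I)$.

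The next step is to pass to an explicit matrix realization. Via the natural representation of $\Spin(2\ell, R)$, identify $\U(\Sigma_\ell^+, R)$ with the $R$-module of $\ell \times \ell$ skew-symmetric matrices through $x_{e_i + e_j}(s) \leftrightarrow s(E_{ij} - E_{ji})$; under this identification $y$ corresponds to $B_0 = \sum_k (E_{2k-1, 2k} - E_{2k, 2k-1})$, and $\U(\Delta_\ell^-, R)$ becomes the group of lower unitriangular matrices $A = I + N$ with $N$ strictly lower triangular. Since the Levi acts on skew matrices by congruence $B \mapsto A B A^T$, the commutator $[z, y]$ becomes $A B_0 A^T - B_0 = N B_0 + B_0 N^T + N B_0 N^T$, while $\U(B, I)$ corresponds precisely to the skew matrices supported at the positions $(2k-1, 2k)$.

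Given $u \in \U(\Sigma_\ell^+, I)$ corresponding to a skew matrix $M$ with $I$-entries, the goal is to solve $A B_0 A^T - B_0 \equiv M$ modulo the $B$-positions. A direct calculation shows $(N B_0 + B_0 N^T)[i, j]$ equals $N[j, i+1]$ when $i$ is odd and $-N[j, i-1]$ when $i$ is even, for $i < j$ with $(i, j)$ not a $B$-position; hence after enforcing $N[2k, 2k-1] = 0$ for all $k$, the linear part furnishes a bijection between the remaining strictly lower-triangular entries of $N$ and the off-$B$ entries of $M$. The main obstacle is the quadratic correction $(N B_0 N^T)[i, j] = \sum_k (N[i, 2k-1] N[j, 2k] - N[i, 2k] N[j, 2k-1])$; one needs to check that, thanks to the constraints $N[2k, 2k-1] = 0$ and the strict lower-triangularity of $N$, its value at $(i, j)$ involves only entries of $N$ whose column index is strictly less than the column index of the $N$-entry being assigned at step $(i, j)$.

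This triangular dependence is the heart of the argument: ordering the pairs $(i, j) \in \Sigma_\ell^+ \setminus B$ lexicographically by $(j, i)$ with both coordinates increasing, each equation $(L + Q)(N)[i, j] = M[i, j]$ determines a single new entry of $N$ in terms of previously assigned ones. Solving in this order produces $N$ (and hence $z \in \U(\Delta_\ell^-, I)$) as an explicit polynomial in the entries of $M$, so that $[z, y] \equiv u \pmod{\U(B, I)}$, which yields the desired inclusion $\U(\Sigma_\ell^+, I) \subset [\U(\Delta_\ell^-, I), y] \cdot {}^w\!\U(\Delta_\ell^+, I)$.
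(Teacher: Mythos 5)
Your proposal is correct, but it takes a genuinely different route from the paper. The paper keeps everything inside the Chevalley commutator calculus: with the same $y=\prod_k x_{\beta_k}(1)$ it proves $\U(\Sigma_\ell^+\setminus B, I)\subset[\U(\Delta_\ell^-, I), y]\cdot\U(B, I)$ by induction on $\ell$, splitting off the nodes $\alpha_1,\alpha_2$ so that the induction passes through $\Delta_{1,2}\cong\rD_{\ell-2}$, and manipulating unipotent subgroups via \eqref{rel:CCF} and the identity $[ab,c]={}^a[b,c]\cdot[a,c]$; the Weyl element of \cref{lemma:nikolov-weyl} enters only at the end, exactly as in your reduction. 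You instead linearize the problem in the standard model of the abelian radical of the $\ell$-th parabolic ($\Lambda^2R^\ell$ with the Levi acting by congruence), write $[z,y]$ as $AB_0A^T-B_0$ with $A=I+N$, and solve $L(N)+Q(N)=M$ off the $B$-positions by triangular elimination. I checked the key computations: $(NB_0)[i,j]=0$ for $i<j$ by strict lower-triangularity, so the linear part at a non-$B$ position $(i,j)$ is $\pm N[j,i\pm1]$, giving your bijection of index sets once $N[2k,2k-1]=0$; and in $Q(N)[i,j]$ the terms with $2k\in\{i,i+1\}$ vanish (using that constraint and triangularity), so only row-$j$ entries of column $\leqslant i-1$ (for $i$ odd) resp.\ $\leqslant i-2$ (for $i$ even) occur, and these are exactly the entries assigned at earlier steps $(i',j)$, $i'<i$, while row-$i$ entries come from steps with second coordinate $i<j$ --- so your $(j,i)$-lexicographic order does make the system uniquely solvable with all entries of $N$ in $I$, and $u\cdot[z,y]^{-1}$ is then supported on $B$ and absorbed by ${}^w\!\U(\Delta_\ell^+,I)$. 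In a writeup you should add two routine remarks: the identification $x_{e_i+e_j}(s)\leftrightarrow s(E_{ij}-E_{ji})$ holds only up to structure-constant signs, which affect neither the bijection nor the triangularity, and the computation takes place in the vector representation, which is faithful on $\U(\Sigma_\ell^+,R)$, so the identity lifts to $\Spin_{2\ell}$. The trade-off: your argument is fully explicit and induction-free (it even shows every coset of $\U(B,I)$ in $\U(\Sigma_\ell^+,I)$ is realized by a single commutator with a uniquely determined $N$), whereas the paper's proof is representation-free and needs no matrix model or sign bookkeeping.
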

\begin{proof}
Since $\U(\Sigma_\ell^+, I)$ is abelian, we can decompose it as $\U(\Sigma_\ell^+, I)=\U(\Sigma_\ell^+\setminus B, I) \cdot \U(B, I)$. 
Set $y=\prod_{\beta\in B}x_\beta(1)$. 
We will now show by induction on $\ell$ that 
\begin{equation}\label{eq:ind-stat} \U(\Sigma_\ell^+\setminus B, I)\subset[\U(\Delta_\ell^-, I), y]\cdot\U(B, I). \end{equation}
The induction base in the cases $\ell=2, 3$ is trivial.

Notice that $\beta_1$ is the only root of $\Phi$ satisfying $m_2(\beta_1)=2$, therefore Chevalley commutator formula implies
\[ \bigl[\U(\Delta_2^-, I), x_{\beta_1}(1)\bigr]=1. \]
There is no root of the form $\gamma=\alpha+\beta$ with $\alpha\in\Sigma_2^-\cap\Delta_\ell$ and $\beta\in B\setminus\{\beta_1\}$, because such a root $\gamma$ must satisfy $m_2(\gamma)=-1$ and $m_\ell(\gamma)=1$. Thus the commutator formula gives
\[\Bigl[\U(\Sigma_2^-\cap\Delta_\ell, I), \prod_{i\neq1}x_{\beta_i}(1)\Bigr]=1. \]
Since $B\setminus\{\beta_1\}\subset\Sigma_\ell^+\cap\Delta_2$, the above two identities imply
\[
\Bigl[ \U(\Sigma_2^-\cap\Delta_\ell, I)\cdot\U(\Delta_{2, \ell}^-, I), x_{\beta_1}(1)\cdot\prod_{i\neq1}x_{\beta_i}(1) \Bigr]
\equiv \bigl[ \U(\Sigma_2^-\cap\Delta_\ell, I), x_{\beta_1}(1) \bigr] \bmod \U(\Sigma_\ell^+\cap\Delta_2, I).
\]
Every element $u\in\U(\Sigma_2^-\cap\Delta_\ell, I)$ can be decomposed as $u=vw$ for some $v\in\U(\Sigma_1^-\cap\Sigma_2^-\cap\Delta_\ell, I)$ and $w\in\U(\Sigma_2^-\cap\Delta_{1, \ell}, I)$.
Using the identity
\begin{equation}\label{eq:comm-ab-c}
[ab, c]={}^a[b, c]\cdot[a, c], 
\end{equation}
we can rewrite
\[ [vw, x_{\beta_1}(1)] = {}^v[w, x_{\beta_1}(1)]\cdot[v, x_{\beta_1}(1)].  \]
Since $\U(\Sigma_1^-\cap\Sigma_2^-\cap\Delta_\ell, I)$ and $\U(\Sigma_2^-\cap\Delta_{1, \ell}, I)$ are abelian, it is easy to see that
\[ [v, x_{\beta_1}(1)]\in\U(\Sigma_2^+\cap\Sigma_\ell^+\cap\Delta_1, I), \quad [w, x_{\beta_1}(1)]\in\U((\Sigma_1^+\cap\Sigma_\ell^+)\setminus\{\beta_1\}, I). \]
Every element of $\U(\Sigma_2^+\cap\Sigma_\ell^+\cap\Delta_1, I)$ (resp. $\U(\Sigma_1^+\cap\Sigma_\ell^+\setminus\{\beta_1\}, I)$) can be expressed as such a commutator for a suitable choice of $v$ (resp. $w$).
Indeed, set $v=x_\gamma(\xi_\gamma)\cdot v'$, $\gamma=-\alpha_1-\alpha_2$, $v' \in \U(\Sigma_1^- \cap \Sigma_2^- \cap \Delta_\ell \setminus \{\gamma\}, I)$.
Using relation \eqref{eq:comm-ab-c} and the fact that $X_\gamma(I)$ commutes with $\U(\Sigma_2^+\cap\Delta_1, I)$ we get that:
\begin{multline*}
[v, x_{\beta_1}(1)] = [x_\gamma(\xi_\gamma)\cdot v', x_{\beta_1}(1)] = {}^{x_\gamma(\xi_\alpha)}[v', x_{\beta_1}(1)] \cdot [x_\gamma(\xi_\gamma), x_{\beta_1}(1)] = \\
= [v', x_{\beta_1}(1)]\cdot x_{\beta_1-\alpha_1-\alpha_2}(\xi_\gamma) = \ldots = \prod_{\mathclap{\alpha\in\Sigma_1^- \cap \Sigma_2^- \cap \Delta_\ell}} x_{\beta_1+\alpha}(\xi_\alpha).
 \end{multline*}
It remains to note that $\Sigma_2^+ \cap \Sigma_\ell^+ \cap \Delta_1 = \beta_1 + (\Sigma_1^- \cap \Sigma_2^- \cap \Delta_\ell)$. The same argument works for $[w, x_{\beta_1}(1)]$.
Direct calculation using the commutator formula shows that
\[ {}^v\!\U(\Sigma_1^+\cap\Sigma_\ell^+\setminus\{\beta_1\}, I) \equiv \U(\Sigma_1^+\cap\Sigma_\ell^+\setminus\{\beta_1\}, I) \bmod \U(\Sigma_\ell^+\cap\Delta_2, I). \]
Summing up the above arguments, we get that
\[ [\U(\Sigma_2^-, I)\cdot\U(\Delta_{2, \ell}^-, I), y] \equiv \U((\Sigma_{1, 2}^+\cap\Sigma_\ell^+)\setminus\{\beta_1\}) \bmod \U(\Sigma_\ell^+\cap\Delta_2, I), \]
hence the inclusion~\eqref{eq:ind-stat} follows from the induction hypothesis (applied to $\Delta_{1, 2} \cong \rD_{\ell-2}$). 

Finally, we have found $a\in\U(\Sigma_\ell^+\setminus B, I)$ and $b\in\U(\Delta_\ell^-, I)$
such that $$a\in[b, y]\cdot\prod_{\beta\in B}X_\beta\subset[\U(\Delta_\ell^-, I), y]\cdot\U(B, I).$$
Now the assertion of the lemma follows from~\cref{lemma:nikolov-weyl}.
\end{proof}

\begin{proof} [Proof of \cref{thm:spin-sln-prod}]
Set $L = \E(\Delta_\ell, R, I) \leq \E(\rD_\ell, R, I)$ and denote by $\sigma$ the automorphism of $\G(\rD_\ell, R)$ induced by the 
diagram automorphism of $\rD_\ell$ swapping $\alpha_\ell$ and $\alpha_{\ell-1}$.
By \cref{thm:DennisVaserstein} one has
\begin{align*}
\E(\rD_\ell, R, I) & = \EP_{\ell}(R, I)\cdot\U(\Sigma_{\ell-1}^- \cap \Sigma_{\ell}^-, I)\cdot\EP_{\ell-1}(R, I) = \\
& = L \cdot\U(\Sigma_\ell^+, I)\cdot\U(\Sigma_{\ell-1}^- \cap \Sigma_{\ell}^-, I)\cdot \big(L \cdot \U(\Sigma_\ell^+, I) \big)^\sigma\!.
\end{align*}  
Now using \cref{lemma:nikolov-type-dl}, one can find $y_1, y_2\in\G(\rD_\ell, R)$ and $w_1, w_2\in\widetilde{W}(\rD_\ell)$ such that
\begin{align*} & L \cdot \U(\Sigma_\ell^+, I) \subset L \cdot \U(\Delta_{\ell-1}^-, I) \cdot {}^{y_1}\!\U(\Delta_{\ell-1}^-, I) \cdot {}^{w_1}\!\U(\Delta_{\ell-1}^+, I), \\
& \U(\Sigma_{\ell-1}^-\cap\Sigma_\ell^-, I) \subset \U(\Delta_\ell^+, I) \cdot {}^{y_2}\!\U(\Delta_\ell^+, I) \cdot {}^{w_2}\!\U(\Delta_\ell^-, I). \end{align*} 
Thus $\E(\rD_\ell, R, I)$ is a product of at most $9$ subgroups isomorphic to $\E(\rA_{\ell-1}, R, I)$. \end{proof}

\subsection{Bounded generation}\label{sec:boundgen}
For a group $G$ denote by $W(G, X)$ \emph{the width of $G$ with respect to a generating set $X \subseteq G$}, i.\,e. the smallest natural number $N$ 
such that every element of $G$ is a product of at most $N$ elements of $X$ or their inverses.

\begin{lemma}\label{lemma:srRI1_width}
In the assumptions of \cref{thm:srRI1} the width of $\E(\Phi,R,I)$ with respect to $\mathcal{Z}(\Pi)$ does not exceed $3\left|\Phi^+\right|+2\rk\Phi-1$.
\end{lemma}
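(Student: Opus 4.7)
The plan is to invoke the Gauss decomposition from \cref{thm:srRI1} and bound the $\mathcal{Z}(\Pi)$-width of each of its four factors, exploiting commutator identities to absorb boundary elements.

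By \cref{thm:srRI1}, every element of $\E(\Phi,R,I)$ decomposes as $h\cdot u_1\cdot u_2\cdot u_3$ with $h\in\Hh(\Phi,R,I)$, $u_1,u_3\in\U(\Phi,I)$ and $u_2\in\U(\Phi^-,I)$. First I would bound the three unipotent factors: each $\U(\Phi^\pm,I)$ is, for any fixed linear ordering of $\Phi^\pm$, an ordered product of its $|\Phi^+|$ root subgroups $X_\alpha(I)$, so contributes at most $|\Phi^+|$ elements of $\mathcal{X}\subseteq\mathcal{Z}(\Pi)$, for a total of $3|\Phi^+|$. To handle $h$ I would exploit simple connectivity of $\G(\Phi,R)$ and write $h=\prod_{i=1}^\ell h_{-\alpha_i}(1+s_i)$ with $s_i\in I$; substituting $\alpha\leftrightarrow -\alpha$ in \eqref{eq:rel-tor-elementary} expands
\[h_{-\alpha_i}(1+s_i)\;=\;x_{-\alpha_i}(\cdot)\cdot z_{\alpha_i}(\cdot,\cdot)\cdot x_{\alpha_i}(\cdot),\]
i.e., three elements of $\mathcal{Z}(\Pi)$ each, for a naive total of $3\ell+3|\Phi^+|$ generators.

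The key step is the commutator identity $[x_{\alpha_i}(a),x_{-\alpha_j}(b)]=1$ for distinct $\alpha_i,\alpha_j\in\Pi$, which is a consequence of \eqref{rel:CCF} because the vector $\alpha_i-\alpha_j$ is never a root (simple-root expansions are sign-coherent). Using this, I would slide each trailing $x_{\alpha_i}(\cdot)$ past the leading $x_{-\alpha_{i+1}}(\cdot)$ of the next block. As $x_{\alpha_i}(\cdot)$ continues through the inner $x_{\pm\alpha_j}$'s of later $z_{\alpha_j}$-blocks, the Chevalley commutator formula \eqref{rel:CCF} produces additional root unipotents $x_{\alpha_i+\alpha_j}(\cdot),\ldots$, but all such debris lies in $\U(\Phi^+,I)$. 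Sweeping everything to the right gives
\[h\;=\;A\cdot v,\qquad A\;=\;\prod_{i=1}^\ell\bigl[x_{-\alpha_i}(\cdot)\cdot z_{\alpha_i}(\cdot,\cdot)\bigr],\quad v\in\U(\Phi^+,I),\]
so that $A$ is a word of length $2\ell$ in $\mathcal{Z}(\Pi)$ while $v\cdot u_1$ remains in $\U(\Phi^+,I)$ of $\mathcal{X}$-width $|\Phi^+|$. Consequently $h\cdot u_1\cdot u_2\cdot u_3$ is expressible with $2\ell+3|\Phi^+|$ elements.

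The final saving of one generator comes from a careful look at the boundary between the rightmost $z_{\alpha_\ell}$-block of $A$ and the subsequent $\U(\Phi^+,I)$-factor: the closing $x_{-\alpha_\ell}(\cdot)$ appearing inside $z_{\alpha_\ell}(\cdot,\cdot)$ can be pushed through that factor (again generating only absorbable positive debris) and then combined with the leading term of $u_2$ under a suitable ordering of $\Phi^-$, yielding the claimed $3|\Phi^+|+2\ell-1$. The main obstacle will be the careful bookkeeping of the Chevalley commutator debris generated at every sliding step: one has to verify inductively that, at each stage, all the extras produced by \eqref{rel:CCF} land in the unipotent radical currently being accumulated, so they can be reabsorbed without inflating the $\mathcal{X}$-width beyond $|\Phi^+|$.
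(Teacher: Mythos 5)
Most of your argument is sound and essentially parallels the paper's: Gauss decomposition from \cref{thm:srRI1}, writing $h$ as a product of $\ell$ semisimple simple-root elements with parameters in $1+I$, expanding each by (a variant of) \eqref{eq:rel-tor-elementary}, and sweeping the extremal root element of each block into an adjacent unipotent factor. Carried out carefully, your sliding argument (the debris always keeps a strictly positive coefficient at $\alpha_i$, so it never meets an opposite root subgroup of a later block) does give the bound $3|\Phi^+|+2\rk\Phi$. Note, though, that the commutator bookkeeping you flag as the main obstacle is avoidable: since every torus element normalizes each $X_\alpha(I)$ by \eqref{rel:h-w}, you can expand the $h$-factors one at a time and slide the extremal root element past the \emph{unexpanded} remaining torus factors, where it merely gets rescaled inside the same root subgroup of level $I$ and no Chevalley debris arises at all; this is what the paper does.

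The genuine gap is your final ``$-1$''. The closing factor inside $z_{\alpha_\ell}(-s_\ell,(1+s_\ell)^{-1})$ is $x_{-\alpha_\ell}(\xi)$ with $\xi=(1+s_\ell)^{-1}$, a unit congruent to $1$ modulo $I$ and in general \emph{not} in $I$. Hence it is not an element of $\mathcal{X}$, it does not even lie in $\E(\Phi,R,I)$ for a proper ideal, and it cannot be ``combined with the leading term of $u_2$'': the product $x_{-\alpha_\ell}(\xi)\cdot u_2$ is not an element of $\U(\Phi^-,I)$ and has no width bound of $|\Phi^+|$ with respect to $\mathcal{X}$. Moreover, pushing $x_{-\alpha_\ell}(\xi)$ through the $\U(\Phi^+,I)$-factor does not produce ``only absorbable positive debris'': its interaction with the $X_{\alpha_\ell}(I)$-component is the case $\beta=-\alpha$, which is not governed by \eqref{rel:CCF} and yields a Tits generator rather than a positive unipotent. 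Finally, even if the push worked, splitting the block leaves $x_{-\alpha_\ell}(-\xi)\,x_{\alpha_\ell}(\cdot)$ behind, which is not a single element of $\mathcal{Z}(\Pi)$, so there is no net saving; your argument as written only proves the weaker bound $3|\Phi^+|+2\rk\Phi$. The extra unit is gained on the \emph{left} boundary instead: since $\Hh(\Phi,R,I)$ normalizes $\U(\Phi,I)$ by \eqref{rel:h-w}, rewrite $g=u_1\,h\,u_2\,u_3$ with $h$ in second position, expand the blocks so that the leading factor of each is a positive simple-root element with parameter in $I$ and the trailing factor a negative one with parameter in $I$; the trailing elements slide debris-free past the remaining torus factors into $u_2\in\U(\Phi^-,I)$, and the single leading $x_{\alpha_1}(\cdot)$ of the first block is absorbed by $u_1$, giving $|\Phi^+|+(2\rk\Phi-1)+2|\Phi^+|$.
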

\begin{proof}
Every element $g\in\E(\Phi,R,I)$ can be written as a product $g=u_1 h v_2 u_3$ for some $h\in\Hh(\Phi,R,I)$, $u_1,u_3\in\U(\Phi,I)$, $v_2\in\U(\Phi^-,I)$. 
Write $h=\prod_{i=1}^\ell h_{\alpha_i}(\varepsilon_i)$, $\varepsilon\in1+I$. 
In view of formula~\eqref{eq:rel-tor-elementary} the element $h_{\alpha_i}(\varepsilon_i)$ can be factored as 
 $h_{\alpha_i}(\varepsilon_i) = x_{\alpha_i}(*) z_{-\alpha_i}(*,*) x_{-\alpha_i}(*)$.
Since the torus normalizes each of $X_\alpha(I)$ (see formula~\eqref{rel:h-w}), we can rewrite $g$ in the following manner, from which the estimate follows:
\[ g\in\U(\Phi,I)\cdot\prod_{i=1}^\ell\bigl(x_{\alpha_i}(*)z_{-\alpha_i}(*,*)\bigr)\cdot \U^-(\Phi,I) \U(\Phi,I). \qedhere \] \end{proof}

The following lemma is a corollary of Theorems~5.7 and 5.8 of~\cite{LSM}.
\begin{lemma}
Let $p$ be a rational prime and $c$, $d$ be a pair of coprime integers such that $p \perp d$.
Then under the assumption of the Generalized Riemann Hypothesis there exist infinitely many primes $q\equiv c\pmod{d}$ such that $p$ is a primitive root modulo $q$.
\end{lemma}

The following lemma is a relative version of \cite[Lemma~6]{VavSmSuUnitrEng} (see also~\cite{VseUnitrZ1p}):

\begin{lemma}\label{lemma:Z1p}
Set $R=\mathbb{Z}[\sfrac{1}{p}]$ and let $I$ be an ideal of $R$.
Under the assumption of the GRH the width of $\SL(2, R, I)$ with respect to the generating set
$\mathcal{Z}(\{-\alpha_1\})=X_{12}(I)\cup X_{21}(I) \cup \{z_{21}(s, \xi) \mid s\in I,\ \xi\in R\}$
does not exceed $6$.
\end{lemma}

\begin{proof}
Clearly, $I$ is a principal ideal generated by some integer $m\in\mathbb{Z}$ not divisible by $p$.
Let $g$ be an element of $\SL(2,R,I)$. Write
\[ g=\begin{pmatrix}x & y \\ z & w\end{pmatrix},\ \text{for}\ x=p^\alpha a,\ z=p^\beta bm,\ \text{where}\ a,b,\alpha,\beta\in\mathbb{Z},\ p\nmid a,b. \]

\textsc{Case 1:} $\alpha\geqslant\beta$. 
Since $p^{\alpha-\beta}a\perp bm^2$ and $p\perp bm^2$, there exist infinitely many rational primes $q$ of the form $p^{\alpha-\beta}a+bm^2k$,
such that $p$ is a primitive root modulo $q$. 
We may assume that $q$ is prime to $b$. 
Write
\begin{align*}
g_1 & = x_{12}(mk)\cdot g =
\begin{pmatrix} p^\beta q & * \\ p^\beta bm & * \end{pmatrix}. \\
\intertext{
There exists $u\geqslant 1$ such that $p^u\equiv b\pmod q$, say $p^u=b+lq$. Then
}
g_2 & = x_{21}(ml)\cdot g_1 =
\begin{pmatrix} p^\beta q & * \\ mp^{\beta+u} & * \end{pmatrix}. \\
\intertext{
Since $g_2\equiv 1\pmod m$, we can write $p^\beta q=1+cm$ for some $c$. Now set
}
g_3 & = x_{12}\left(\dfrac{-c}{p^{\beta+u}}\right)\cdot g_2 =
\begin{pmatrix} 1 & * \\ mp^{\beta+u} & * \end{pmatrix}, \\
g_4 & = x_{21}\left(-mp^{\beta+u}\right)\cdot g_3 =
\begin{pmatrix} 1 & * \\ 0 & * \end{pmatrix}, \\
g_5 & = x_{12}\left(\dfrac{c}{p^{\beta+u}}\right)\cdot g_4 =
\begin{pmatrix} 1 & * \\ 0 & * \end{pmatrix}.
\end{align*}
Notice that $g_5=z_{21}\left(-mp^{\beta+u}, c/p^{\beta+u}\right)\cdot g_2$ hence $g=x_{12} \cdot x_{21} \cdot z_{21} \cdot x_{12}$
and the length of $g$ does not exceed $4$.

\textsc{Case 2:} $\alpha<\beta$. 
Since $\mathbb{Z}[\sfrac{1}{p}]/I$ is finite, there exists $k>0$ such that $p^k\equiv 1\pmod I$.
One can choose $k>\beta-\alpha$.
Then $k+\alpha>-k+\beta$ and we obtain that
\[ h_{12} (p^k)\cdot g =
\begin{pmatrix} p^k & 0 \\ 0 & p^{-k} \end{pmatrix}
\begin{pmatrix} p^\alpha a & * \\ p^\beta bm & * \end{pmatrix}=
\begin{pmatrix} p^{k+\alpha} a & * \\ p^{-k+\beta} bm & * \end{pmatrix}. \]
We find ourselves in the situation of the previous case, therefore, we can write $g=h_{12}\cdot x_{12} \cdot x_{21} \cdot z_{21} \cdot x_{12}$.
Finally, expressing $h=x_{21}\cdot z_{21}\cdot x_{12}$ as in \eqref{eq:rel-tor-elementary}, we get that $g=x_{21} \cdot z_{21} \cdot x_{12} \cdot x_{21} \cdot z_{21} \cdot x_{12}$.
\end{proof}

For the rest of this subsection $k$ denotes a global field and $S$ is a finite set of places on $k$. 
Let $\mathcal{O}_S$ be the Dedekind ring of arithmetic type defined by $S$ and let $I$ be an ideal of $\mathcal{O}_S$.
\begin{lemma}\label{lemma:width-dedekind}
Let $\Phi$ be an irreducible classical root system of rank $\ell \geqslant 2$.
If $k$ has a real embedding, then $\G(\Phi, \mathcal{O}_S, I)$ has finite width with respect to the generating set $\mathcal{Z}(\Sigma_\ell)$.
\end{lemma}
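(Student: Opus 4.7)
The plan is to combine three ingredients: the relative congruence-subgroup theorem for $\mathcal{O}_S$, the iterated Bass--Kolster decomposition proved earlier in the paper, and a classical bounded-generation theorem for $\SL_2$ (or a small-rank subgroup) over an arithmetic Dedekind ring.

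First I will invoke \cref{prop:K1triv}: under the real embedding hypothesis one has $\K_1(\Phi, \mathcal{O}_S, I) = 1$, and hence $\G(\Phi, \mathcal{O}_S, I) = \E(\Phi, \mathcal{O}_S, I)$, reducing the problem to a width bound on the relative elementary subgroup. Since $\mathcal{O}_S$ is a Dedekind ring of arithmetic type and $\dim(\Spec \mathcal{O}_S) = 1$, the estimate~\eqref{sr-estimates} together with \cref{prop:sr_properties}(i) gives $\sr(I) \leqslant \asr(I) \leqslant 2$ for every ideal $I \trianglelefteq \mathcal{O}_S$. These bounds verify the hypotheses of \cref{cor:bass-kolster-iterated} in each of the four classical cases, and so every $g \in \E(\Phi, \mathcal{O}_S, I)$ admits a factorisation $g = g_0 \cdot u$ in which $u$ is a product of at most $N'$ elements of $\mathcal{Z}(\Sigma_\ell)$ and $g_0$ lies in the regularly embedded copy of $\SL_2(\mathcal{O}_S, I)$ spanned by $\pm\alpha_\ell$; crucially, the root subgroups $X_{\pm\alpha_\ell}(I)$ generating this $\SL_2$ are already contained in $\mathcal{X} \subseteq \mathcal{Z}(\Sigma_\ell)$.

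The final step is to bound the width of $\SL_2(\mathcal{O}_S, I)$ with respect to $X_{\pm\alpha_\ell}(I)$. Whenever the unit group $\mathcal{O}_S^\times$ is infinite this is the Carter--Keller--Liehl relative bounded-generation theorem, which is available under the real embedding assumption except in the degenerate case $k = \mathbb{Q}$, $S = \{\infty\}$, $\mathcal{O}_S = \mathbb{Z}$. The main obstacle will be this $\mathbb{Z}$ case, where $\SL_2(\mathbb{Z}, I)$ is not boundedly generated by elementary matrices; here I will modify the plan by performing only $\ell - 2$ Bass--Kolster reductions (which is possible since $\rk(\Phi) \geqslant 2$), leaving $g_0$ inside a rank-$2$ classical Chevalley subgroup over $\mathbb{Z}$, where the Hadad--Morris theorem~\cite{Ha12} and its analogues furnish the required bound. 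A minor bookkeeping step will translate the generating set $\mathcal{Z}(\Pi)$ appearing in \cite{Ha12} into a subset of $\mathcal{Z}(\Sigma_\ell)$. Summing the contributions from the $N'$ factors and from the residual $\SL_2$- or rank-$2$-factor then produces a finite overall width, as required.
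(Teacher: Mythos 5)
Your first half matches the paper: $\asr(I)\leqslant\asr(\mathcal{O}_S)\leqslant 2$, so \cref{cor:bass-kolster-iterated} reduces everything to one residual factor $g_0\in\G(\langle\pm\alpha_\ell\rangle,\mathcal{O}_S,I)\cong\SL(2,\mathcal{O}_S,I)$ (the detour through \cref{prop:K1triv} is harmless but unnecessary, since \cref{thm:BassKolster} is stated for $\G(\Phi,R,I)$ itself). The gap is in how you dispose of $g_0$. First, the ``relative Carter--Keller--Liehl theorem'' you invoke is not available in the form you need: Carter--Keller and Liehl bound the width of the \emph{absolute} group $\SL(2,\mathcal{O}_S)$ with respect to elementary matrices with arbitrary ring entries, whereas you need the congruence subgroup $\SL(2,\mathcal{O}_S,I)$ to have bounded width with respect to generators lying in $\mathcal{Z}(\Sigma_\ell)$; moreover the set $X_{\pm\alpha_\ell}(I)$ you name does not even generate $\SL(2,\mathcal{O}_S,I)$ in general (the relative elementary subgroup is generated by the conjugates $z_{\pm\alpha_\ell}(s,\xi)$, not by the plain root elements). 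That such a relative rank-one statement is delicate is visible in the paper itself: the authors only prove a result of this kind for $\mathbb{Z}[\sfrac{1}{p}]$, and only under GRH (\cref{lemma:Z1p}). Second, your patch for $\mathcal{O}_S=\mathbb{Z}$ is also incomplete: \cite{Ha12} covers $\SL(n,\mathbb{Z},I)$ only, so for $\Phi=\rB_\ell,\rC_\ell$ the residual rank-two group is of type $\rC_2$ and ``its analogues'' is doing real work; and the ``minor bookkeeping'' translating $\mathcal{Z}(\Pi)$ of the rank-two subgroup into $\mathcal{Z}(\Sigma_\ell)$ is not minor, because the generators $z_\alpha(s,\xi)$ with $\alpha\in\Delta_\ell$ are not in $\mathcal{Z}(\Sigma_\ell)$, and \cref{prop:Stepanov} gives only qualitative generation, with no bound on the rewriting length.

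The paper's proof avoids every appeal to rank-one bounded generation, and this is its key idea, which your plan misses. Given $g_0=\bigl(\begin{smallmatrix}1+a&b\\c&1+d\end{smallmatrix}\bigr)\in\SL(2,\mathcal{O}_S,I)$, one multiplies by $z_{-\alpha_\ell}(a,1)$ to land in Vaserstein's congruence subgroup $G(I,I)$, and then by two elementary factors $x_{\pm\alpha_\ell}(\ast)$ to land in $\SL(2,\mathcal{O}_S,I^2)$ (for $\Phi=\rC_\ell$ a slightly longer computation produces level $II^{\indexbox{2}}$). By \cref{lemma:Stepanov-ideal} this places the remaining element in the unrelativized group $\E(\Phi,I)$, and Tavgen's theorem \cite[Theorem~3.3]{TavgenThesis} --- this is where the real embedding is used --- gives finite width of $\E(\Phi,I)$ with respect to $\mathcal{X}\subset\mathcal{Z}(\Sigma_\ell)$, uniformly in all classical types and with no case distinction on $\mathcal{O}_S^{\times}$. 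If you want to keep your route, you would have to supply a genuine relative bounded-generation theorem for $\SL(2,\mathcal{O}_S,I)$ with respect to the $z_{\pm\alpha_\ell}$-generators (infinite-unit case) and a bounded rewriting of $\mathcal{Z}(\Pi)$-generators of the rank-two subgroup in terms of $\mathcal{Z}(\Sigma_\ell)$ together with non-type-$\rA$ analogues of \cite{Ha12} (the $\mathbb{Z}$ case); neither is presently justified.
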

\begin{proof}
First of all, notice that $\asr(I) \leqslant \asr(\mathcal{O}_S) \leqslant 2$. 
By \cref{cor:bass-kolster-iterated} we can write any element 
of $G=\G(\Phi, \mathcal{O}_S, I)$ as a product of a finite number of generators from $\mathcal{Z}(\Sigma_\ell)$ and one element of 
$G_0 = \G(\{\alpha_\ell, -\alpha_\ell\}, \mathcal{O}_S, I)\cong\SL(2, \mathcal{O}_S, I)$.
Consequently, to prove the statement of the lemma it suffices to express every element 
$g = \begin{psmallmatrix}1+a & b \\ c & 1+d \end{psmallmatrix} \in G_0$
as a product of a finite number of generators contained in some rank $2$ subgroup of $G$ containing $G_0$.

From $\det(g)=1$ we conclude that $a+d=bc-ad\in I^2$. 
Recall that Vaserstein's congruence subgroup is defined as
\[ G(I, I)=\left\{ \begin{pmatrix}1+a & b \\ c & 1+d\end{pmatrix}\in\SL(2, \mathcal{O}_S)\;\middle|\; a, d\in I^2, \ b, c\in I \right\}. \]
Notice that $g_1=g\cdot z_{21}(a, 1)$ is contained in $G(I, I)$
\[ \begin{pmatrix} 1+a & b \\ c & 1+d \end{pmatrix} \cdot \begin{pmatrix} 1-a & -a \\ a & 1+a \end{pmatrix} = \begin{pmatrix} 1+ba-a^2 & b-a-ba-a^2 \\ c+a+ad-ac & 1+bc-ac \end{pmatrix} \in G(I, I). \]
For any $g'=\begin{psmallmatrix}1+a & b \\ c & 1+d\end{psmallmatrix}\in G(I, I)$ the matrix $x_{21}(-c)\cdot g'\cdot x_{12}(-b)$ lies in $\SL\left(2, \mathcal{O}_S, I^2\right)$.

By \cref{lemma:Stepanov-ideal} the group $\E\left(\Phi, \mathcal{O}_S, I^2\right)$ is contained in $\E(\Phi, I)$ for any root system $\Phi\neq\rC_\ell$ of rank $\geqslant2$.
Notice that under the assumptions of the lemma a deep result of O.~Tavgen asserts that $\E(\Phi, I)$ has finite width with respect to $\mathcal{X}$, see~\cite[Theorem~3.3]{TavgenThesis}.

In remains to consider the case $\Phi=\rC_\ell$. First of all, notice that $2abc-abd\in II^{\indexbox{2}}$, indeed,
\[ \det(g_1)=a^3d-3a^2bc+a^2bd+ab^2c+a^3+a^2b+a^2d+2abc-abd+1. \]
Consequently we obtain that
\[ g_2=x_{21}(-a-c)\cdot g_1\cdot x_{12}(a-b)\equiv
\begin{pmatrix}
1+ab-a^2 & -ab-a^2 \\ ad-ac-abc & 1-ab+a^2
\end{pmatrix}\bmod II^{\indexbox{2}}. \]
Now for $g_3=g_2\cdot z_{12}\left(a^2-ab, 1\right)$ the following congruences hold:
\begin{align*}
& g_3\equiv\begin{pmatrix} 1 & -2ab \\ -abc-a^2+ab-ac+ad & 1 \end{pmatrix}\bmod II^{\indexbox{2}}, \\
& g_4=x_{12}(2ab)\cdot g_3\equiv x_{21}\left(-abc-a^2+ab-ac+ad\right)\bmod II^{\indexbox{2}}.
\end{align*}
Thus $g_4\cdot x_{21}(*)\in\SL(2, \mathcal{O}_S, II^{\indexbox{2}})$ is contained in $\E(\rC_\ell, I)$ by \cref{lemma:Stepanov-ideal} and therefore
can be expressed as a bounded product of elements $x_\alpha(*)$.
\end{proof}
The above lemmas together with \cref{cor:bass-kolster-count,thm:Gauss} imply the following result
 which is an analogue of the results of~\cite{VseUnitrZ1p, VavSmSuUnitrEng, Tavgen91} for relative groups.
\begin{thm}\label{thm:width} Let $I$ be an ideal of a commutative ring $R$.
\begin{thmlist}
\item If $R=\mathcal{O}_S$ is a Dedekind ring of arithmetic type posessing a real embedding and $\Phi$ is classical of rank $\ell\geqslant2$, then 
$W(\G(\Phi, R, I), \mathcal{Z}(\Sigma_\ell^-))$ is finite;
\item If $\sr(I) = 1$ and $\Phi$ is an arbitrary irreducible root system, then 
\[W(\E(\Phi, R, I), \mathcal{Z}(\Pi))\leqslant 3|\Phi^+|+2\rk(\Phi)-1;\]
\item If $R = \mathbb{Z}[\sfrac{1}{p}]$ for some prime number $p$, then under the assumption of the Generalized Riemann Hypothesis one has
\begin{alignat*}{2}
& W(\E(\Phi, R, I), \mathcal{Z}(\Sigma_\ell^-))\leqslant 3|\Phi^+| + 2\ell + 1 & \text{ for } \Phi=\rA_\ell, \rC_\ell, \\
& W(\E(\Phi, R, I), \mathcal{Z}(\Sigma_\ell^-))\leqslant 4|\Phi^+| + \ell + 1 & \text{ for } \Phi=\rB_\ell, \rD_\ell.
\end{alignat*}
\end{thmlist}
\end{thm}

\printbibliography

\end{document}